\newcommand{\be}{\begin{equation}}
\newcommand{\ee}{\end{equation}}
\newcommand{\beq}{\begin{eqnarray}}
\newcommand{\eeq}{\end{eqnarray}}
\newtheorem{thm}{Theorem}[section]
\newtheorem{lma}{Lemma}[section]
\newtheorem{prop}{Proposition}[section]
\newtheorem{cor}{Corollary}[section]
\theoremstyle{definition}
\newtheorem{defn}{Definition}[section]
\theoremstyle{remark}
\newtheorem{rem}{Remark}[section]
\numberwithin{equation}{section}
\def\E{\mathcal{E}}
\def\p{\partial}
\def\S{\Sigma}
\def\R{\mathbb{R}}
\def\H{\mathbb{H}}
\def\Sp{\Sigma^\prime}
\def\tr{{\rm tr}}
\def\p{\partial}
\def\lf{\left}
\def\ri{\right}
\def\ol{\overline}
\def\R{\Bbb R}
\def\la{\langle}
\def\ra{\rangle}
\def\Ric{\text{\rm Ric}}
\def\div{\text{\rm div}}
\def\Pi{\displaystyle{\mathbb{II}}}
\def\E{\mathcal{E}}
\def\Sp{\mathcal{S}}
\def\X{\mathcal{X}}
\def\M{\mathcal{M}}
\def\stop{\hfill$\Box$}
\begin{document}

\title{On geometric problems related to Brown-York and Liu-Yau quasilocal mass}

\begin{abstract}
We discuss some geometric problems related to the definitions of quasilocal mass
proposed by  Brown-York \cite{BYmass1} \cite{BYmass2} and Liu-Yau \cite{LY1} \cite{LY2}.
Our discussion consists of three parts. In the first part, we propose a new
variational problem on compact manifolds with boundary, which is motivated by
the study of Brown-York mass. We prove that critical points of
this variation problem are exactly static metrics. In the second part, we derive
a derivative formula for the Brown-York mass of a smooth family of closed 2 dimensional surfaces
evolving in an ambient three dimensional manifold. As an interesting by-product, we are able to
write the ADM mass \cite{ADM61} of an asymptotically flat $ 3$-manifold  as the sum of
the Brown-York mass of a coordinate sphere $ S_r $ and an integral of the scalar curvature
plus a geometrically constructed function $ \Phi (x) $ in the asymptotic region  outside $ S_r $.
In the third part, we prove that for any closed,  spacelike,  $ 2$-surface $ \Sigma $
in the Minkowski space $ \R^{3,1} $
for which the Liu-Yau mass is defined, if $ \Sigma $ bounds a compact spacelike hypersurface
in $ \R^{3,1}$,  then the Liu-Yau mass of $ \Sigma $ is  strictly positive unless $ \Sigma $ lies on a hyperplane.
We also show that the examples given by \'{O}  Murchadha,   Szabados  and Tod  \cite{MST} are special cases of this result.
\end{abstract}

\keywords{quasilocal mass, static metric, scalar curvature}

\renewcommand{\subjclassname}{\textup{2000} Mathematics Subject Classification}
 \subjclass[2000]{Primary 53C20; Secondary 58JXX}
\author{Pengzi Miao$^1$, Yuguang Shi$^2$
and Luen-Fai Tam$^3$}

\address{School of Mathematical Sciences, Monash University,
Victoria, 3800, Australia.} \email{Pengzi.Miao@sci.monash.edu.au}

\address{Key Laboratory of Pure and Applied mathematics, School of Mathematics Science, Peking University,
Beijing, 100871, P.R. China.} \email{ygshi@math.pku.edu.cn}

\address{The Institute of Mathematical Sciences and Department of
 Mathematics, The Chinese University of Hong Kong,
Shatin, Hong Kong, China.} \email{lftam@math.cuhk.edu.hk}

\thanks{$^1$ Research partially supported by Australian Research Council Discovery Grant  \#DP0987650}

\thanks{$^2$Research partially supported by NSF grant of China and
Fok YingTong Education Foundation.}
\thanks{$^3$Research partially supported by Hong Kong RGC General Research Fund  \#GRF 2160357}
\date{June 2009}

\maketitle
 \markboth{Pengzi Miao, Yuguang Shi  and Luen-Fai Tam}
 {Geometric problems related to Brown-York and Liu-Yau quasilocal mass}

\section{Introduction}
In this work, we will discuss some geometric problems related to the definitions of quasilocal mass
proposed by  Brown-York \cite{BYmass1} \cite{BYmass2} and Liu-Yau \cite{LY1} \cite{LY2}.
In general, there are certain properties that a reasonable definition of quasilocal mass should satisfy, see \cite{ChristodoulouYau1986} for example. The most important property is the positivity. There are results on positivity of Brown-York mass and Liu-Yau mass in \cite{ShiTam02,LY2,WangYau07,ShiTam07,WangYau08}.
In particular, the following is a consequence on the positivity of Brown-York mass proved by the last two authors in \cite{ShiTam02}.
Let $ g_e $ be the standard Euclidean metric on $ \R^3 $.
Let $\Omega$ be a bounded strictly convex domain in $\R^3$  with smooth boundary $\Sigma$ which has  mean curvature $H_0$.
Then  $\int_\Sigma H_0d\sigma$ is a maximum of the functional $\int_\Sigma Hd\sigma$ on the class of smooth metrics with nonnegative scalar curvature on $\Omega$ which agree with $ g_e $ tangentially on $ \Sigma $ and have positive boundary mean curvature $H$.  It is interesting to see if this is still true for general domains
in $ \R^3 $.

 In \cite{ShiTam07}, a similar result was proved for domains in $\H^3$, the hyperbolic $3$-space. Namely, it was proved that
 if $ g_h $ is the standard hyperbolic metric on $ \H^3 $ and
 $\Omega $  is a bounded domain with strictly convex smooth boundary $\Sigma$ which is a topological sphere and has mean
 curvature $ H_0 $, then $\int_\Sigma H_0\cosh r d\sigma$ is a maximum of the functional $\int_\Sigma H\cosh rd\sigma$ on the  class of smooth metrics with  scalar curvature bounded below
 by $ -6$  which agree with $ g_h $ tangentially  on $\Sigma$ and have  positive boundary mean curvature $H$. Here $r$ is the distance function on $\H^3$ from a fixed point in $ \Omega$.  Again it is interesting to see if this is still true for general domains in $ \H^3$.

The results and questions above motivate us to study the functional
$$
F_\phi(g)=\int_\Sigma H\phi \ d\sigma,
$$
where  $\Sigma$ is the boundary of an $n$ dimensional compact manifold $\Omega$,
$ \phi $ is a given   smooth nontrivial   function (that is $\phi\not\equiv0$) on $ \Sigma $,
and $ d \sigma $ is the volume form of a fixed metric $ \gamma $
on $ \Sigma$.  The class of metrics $g$ we are interested
is the space of metrics with constant scalar curvature $K$ which
induce the metric $\gamma$ on $\Sigma$.  In Theorem \ref{static-t1}, we will prove the following:
 {\it $g$ is a critical point of $F_\phi (\cdot ) $ if and only if $g$ is a static metric with a static
potential  $N$ that equals $ \phi $ on $ \Sigma $.  That is to say}:
$$
\left\{
\begin{array}{rcc}
- ( \Delta_gN )g + \nabla^2_g N -   N  \Ric(g) & = & 0, \ \ on \ \Omega \\
N & = & \phi, \ \ at \ \Sigma.
\end{array}
\right.
$$
In the theorem, for $K>0$, we also assume that the first Dirichlet eigenvalue of $(n-1)\Delta_g+K$ is positive.

In particular, if $\phi=1$, $K=0$  and $n=3$, we can conclude that
$g$ is a critical point of $ \int_\Sigma H \ d \sigma $ if
and only if $ g $ is a flat metric.

Another important question on quasilocal mass is whether
it has some monotonicity property.  In \cite{ShiTam02}, it was shown that the Brown-York mass of the  boundaries of certain domains in
a space  with some qausi-spherical metric is monotonically decreasing rather than increasing as the domains become larger.
In Theorem \ref{evolution-bymass}, we will derive {\it a more general formula for the derivative of the Brown-York mass of a smooth family of surfaces with positive Gaussian curvature which evolve in
an ambient manifold}. The formula gives a generalization of the
monotonicity formula in \cite{ShiTam02} which plays a key role
 in the proof  of the positivity of Brown-York mass. As an interesting
 by-product of this derivative formula,
 in Corollary \ref{ADMmassformula} we are able to write the ADM mass \cite{ADM61}  of an asymptotically flat $ 3$-manifold
 as the sum of the Brown-York mass of a coordinate sphere $ S_r $ and an integral of the scalar curvature plus a geometrically constructed function $ \Phi (x) $ in the asymptotic region  outside $ S_r $.

The Minkowski space $ \R^{3,1}$ represents the zero energy state
in general relativity. Thus, a reasonable notion of quasilocal mass should be such that
its value of a spacelike $2$-surface in $ \R^{3,1}$
equals zero.  In \cite{LY1,LY2}, the Liu-Yau mass was introduced and its positivity was proved. In the time symmetric case, this coincides with the Brown-York mass. However,  \'{O}. Murchadha,   Szabados  and   Tod  \cite{MST} constructed spacelike $2$-surfaces $\Sigma$ with spacelike mean curvature
vector $\vec H$ in  $ \R^{3,1}$ and with positive Gaussian curvature such that the Liu-Yau mass of $\Sigma$ given by
$$
\mathfrak{m}_{_{\rm LY}} (\Sigma) =\frac1{8\pi}\int_\Sigma (H_0-|\vec H|) d\sigma
$$
is strictly positive.  Here $H_0$ is the mean curvature of $\Sigma$ when  isometrically embedded in $\R^3$ and $ | \vec H | $
is the Lorentzian norm of $ \vec H $ in $ \R^{3,1}$.
Recently, Wang and Yau \cite{WangYau07,WangYau08} introduce   another definition of quasilocal mass to address this question. In Theorem \ref{LiuYau-t1} in this paper, we will prove the following:  {\it Let $\Sigma$ be a closed, connected,
spacelike $ 2$-surface in the Minkowski space $ \R^{3,1}$  with spacelike mean curvature vector and with positive Gaussian curvature. Suppose $\Sigma$ spans a compact, spacelike hypersurface in $ \R^{3,1}$, then the Liu-Yau mass of $\Sigma$ is strictly positive, unless $ \Sigma $ lies on a hyperplane}. The results give some properties on isometric embeddings of  compact   surfaces with positive Gaussian curvature in the Minkowski space.
We will also show that all the examples in \cite{MST} satisfy the conditions in Theorem \ref{LiuYau-t1}.

This paper is organized as follows. In  section 2, we will prove that static metrics are the only critical points of the functional
$F_\phi(\cdot) $. In  section 3, a formula for the derivative of the Brown-York mass will be derived and some applications will be  given. In  section 4, we will prove that for  ``most" spacelike $ 2$-surfaces  in $ \R^{3,1} $ for which the Liu-Yau mass is defined, their Liu-Yau mass is strictly positive. In the appendix, we prove some
results on the differentiability of a $1$-parameter family of isometric
embeddings in $ \R^3$, following the arguments of
Nirenberg  \cite{Nirenberg}. The results will be used in section 3.

We would like to thank Robert Bartnik for useful discussions on  the existence of maximal surfaces.

\section{static metrics and Brown-York type integral}

Throughout this section, we let $\Omega$ be an $ n $-dimensional ($ n \ge 3$) compact manifold with smooth boundary $\Sigma$.
Let $\gamma$ be a smooth Riemannian  metric on $\Sigma$. As in \cite{MiaoTam08}, for a constant $K$ and any integer $ k > \frac{n}{2} + 2 $,
we let $\mathcal{M}_{\gamma}^K$ be the set of $W^{k,2}$ metrics $g$ on $\Omega$ with constant scalar curvature $K$ such that $g|_{T(\Sigma)}=\gamma$.
If $g\in \mathcal{M}_{\gamma}^K$ and the first Dirichlet eigenvalue of $(n-1)\Delta_g+K$ is positive,
 where $ \Delta_g $ is the usual Laplacian operator of $ g $, then $\mathcal{M}_{\gamma}^K$ is a manifold near $g$ (see \cite{MiaoTam08} for detail).
 Let $ \phi $ be a given smooth function on $ \Sigma $,  we define the following functional on $\mathcal{M}_{\gamma}^K$:
 \be  \label{weightedBY-e1}
 F_\phi(g)=\int_\Sigma H_g\phi \ d\sigma ,
 \ee
where $H_g$ is the mean curvature of $\Sigma$ in $ (\Omega, g) $
 with respect to the outward unit normal and $ d \sigma $ is the volume form of $ \gamma $.
 Motivated by the results in \cite{ShiTam02,ShiTam07} on the positivity of
Brown-York mass and some generalization, we want to determine the critical points of $ F_\phi ( \cdot ) $ on $ \mathcal{M}_\gamma^K $.

Before we state the main result, we recall the following definition from \cite{Corvino00}:
\begin{defn}
A metric $ g $ on an open set $ U $  is called a {\it static} metric on $ U $ if
 there exists a nontrivial function $ N $ (called the {\it static potential})  on $ U $ such that
 \be \label{staticeq}
 - ( \Delta_gN )g + \nabla^2_g N -   N  \Ric(g)  = 0 .
 \ee
 Here $ \Delta_g $, $ \nabla^2_g $ are the usual Laplacian, Hessian operator of $ g $ and
 $ \Ric (g) $ is the Ricci curvature of $ g $.
\end{defn}

A basic property of static metrics is that they are necessarily metrics of constant scalar curvature \cite[Proposition 2.3]{Corvino00}.

In the following, we obtain a characterization of static metrics in $ \mathcal{M}^K_\gamma $ using the function $ F_\phi ( \cdot ) $.

\begin{thm}\label{static-t1}
With the above notations, let $ \phi $ be a nontrivial smooth   function on $ \Sigma $.
Suppose $ g \in \mathcal{M}^K_\gamma $ such that the first Dirichlet eigenvalue of
$ ( n - 1 ) \Delta_g + K $ is positive.
Then $ g $ is a critical point of $ F_\phi ( \cdot ) $ defined in \eqref{weightedBY-e1}
 if and only if
$ g $ is a static metric  with a static potential $ N $ such that
$ N = \phi $ on $ \Sigma $.
\end{thm}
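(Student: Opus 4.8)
The plan is to run the variational calculation on the constraint manifold $\mathcal{M}^K_\gamma$ and to exploit the fact that the static operator in \eqref{staticeq} is exactly the formal $L^2$-adjoint of the linearized scalar curvature. Writing $DR_g$ for the linearization at $g$ of the map $g\mapsto R(g)$, one has
\[
DR_g(h) = -\Delta_g(\tr_g h) + \div_g \div_g h - \langle \Ric(g), h\rangle ,
\]
whose formal adjoint, acting on functions, is
\[
L_g^*(N) = -(\Delta_g N)g + \nabla^2_g N - N\,\Ric(g),
\]
precisely the left-hand side of \eqref{staticeq}. Thus ``$g$ is static with potential $N$'' is the same as ``$L_g^*(N)=0$''. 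Under the positivity hypothesis on the first Dirichlet eigenvalue of $(n-1)\Delta_g+K$, the results of \cite{MiaoTam08} make $\mathcal{M}^K_\gamma$ a manifold near $g$, with
\[
T_g\mathcal{M}^K_\gamma = \{\, h : DR_g(h)=0,\ h|_{T(\Sigma)}=0 \,\}.
\]
The same eigenvalue hypothesis gives surjectivity of $DR_g$ on the space $\{h : h|_{T(\Sigma)}=0\}$ (its cokernel is the kernel of $L_g^*$ under a Dirichlet condition, which the eigenvalue bound forces to be trivial via the trace identity $(n-1)\Delta_g N + KN=0$), and this surjectivity is what will produce a Lagrange multiplier.

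Since $\phi$ and $d\sigma$ are fixed, for $h\in T_g\mathcal{M}^K_\gamma$ we have $DF_\phi|_g(h)=\int_\Sigma \phi\, DH_g(h)\, d\sigma$, where $DH_g(h)$ is the variation of the boundary mean curvature; because $h|_{T(\Sigma)}=0$, this can be written explicitly in terms of $h(\nu,\nu)$, its normal derivative, and the tangential part $h(\nu,\cdot)^\top$. The next ingredient is the Green-type identity obtained by pairing $DR_g(h)$ against a function $N$ and integrating by parts twice,
\[
\int_\Omega N\, DR_g(h)\, dV_g = \int_\Omega \langle L_g^*(N), h\rangle\, dV_g + \int_\Sigma \mathcal{B}(N,h)\, d\sigma ,
\]
with an explicit boundary integrand $\mathcal{B}(N,h)$. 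The analytic heart of the argument is to simplify $\mathcal{B}(N,h)$ when $h|_{T(\Sigma)}=0$ and to show that, along variations with $h(\nu,\cdot)^\top=0$, it is a constant multiple of $N\, DH_g(h)$ (the remaining cross terms involve $\nabla^\Sigma N$ and $h(\nu,\cdot)^\top$ and drop out for such $h$).

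With these identities, the equivalence is as follows. For the forward direction, criticality means $DF_\phi|_g$ annihilates $\ker DR_g \cap \{h|_{T(\Sigma)}=0\}$; surjectivity of $DR_g$ and the closed range theorem then yield a function $N$ with $\int_\Sigma \phi\, DH_g(h)\, d\sigma = \int_\Omega N\, DR_g(h)\, dV_g$ for all $h$ with $h|_{T(\Sigma)}=0$. Testing against $h$ compactly supported in the interior kills the left-hand side and $\mathcal{B}$, so the Green identity gives $\int_\Omega \langle L_g^*(N),h\rangle\, dV_g=0$ for all such $h$, whence $L_g^*(N)=0$ and $g$ is static (the hypothesis $\phi\not\equiv 0$ prevents $N\equiv 0$, since $N\equiv 0$ would force $\int_\Sigma \phi\, DH_g(h)\,d\sigma=0$ for a $DH_g(h)$ ranging over all functions, i.e. $\phi\equiv 0$). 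Feeding $L_g^*(N)=0$ back in gives $\int_\Sigma \phi\, DH_g(h)\,d\sigma = \int_\Sigma \mathcal{B}(N,h)\,d\sigma$; choosing $h$ with $h(\nu,\cdot)^\top=0$ and letting $h(\nu,\nu)$ and its normal derivative vary freely makes $DH_g(h)$ an arbitrary function on $\Sigma$, so the boundary identification forces $N=\phi$ on $\Sigma$. The converse is the same computation read backwards: if $g$ is static with $N|_\Sigma=\phi$, then for $h\in T_g\mathcal{M}^K_\gamma$ both bulk integrals in the Green identity vanish (as $L_g^*(N)=0$ and $DR_g(h)=0$), so $\int_\Sigma \mathcal{B}(N,h)\,d\sigma=0$, and the boundary identification gives $DF_\phi|_g(h)=\int_\Sigma \phi\, DH_g(h)\,d\sigma=0$.

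The step I expect to be the main obstacle is the boundary analysis: deriving the exact form of $DH_g(h)$ for a variation with $h|_{T(\Sigma)}=0$, computing $\mathcal{B}(N,h)$ from the double integration by parts, and verifying that under the static equation these match so that only the factor $N|_\Sigma=\phi$ survives. The functional-analytic step—confirming that the eigenvalue hypothesis yields surjectivity of $DR_g$ and hence the multiplier $N$—can be imported from \cite{MiaoTam08}, so the genuinely new work lies in the boundary matching.
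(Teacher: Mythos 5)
Your skeleton --- the Green-type identity pairing $DR_g(h)$ against $N$, whose bulk term is the static operator of \eqref{staticeq}, testing against interior-supported $h$ to force staticity, and a boundary identification to force $N=\phi$ on $\Sigma$ --- is exactly the structure of the paper's computation (its \eqref{static-e1}--\eqref{static-e4} and \eqref{usingN}). The difference is in how the multiplier $N$ is produced, and this is where your proposal has two genuine gaps.

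First, the multiplier step. Triviality of the cokernel of $DR_g$ on $\{h : h|_{T(\Sigma)}=0\}$ does not by itself give surjectivity: $DR_g$ is an underdetermined (non-elliptic) operator, so closed range is not automatic and must be proved, and it is not stated in \cite{MiaoTam08} in the form you want to import. Moreover, even granting surjectivity, the closed range theorem only yields a multiplier $\lambda$ in the dual of a Sobolev space; you must still show $\lambda$ is represented by integration against a function that is regular up to $\Sigma$ (interior elliptic regularity via the trace of your $L_g^*$ handles the interior, but your boundary identification needs a trace of $N$ on $\Sigma$ to exist). Both issues can be repaired at once by noting that the eigenvalue hypothesis gives an explicit right inverse of $DR_g$ in the conformal direction: since $DR_g(ug) = -(n-1)\Delta_g u - Ku$, solving $(n-1)\Delta_g u + Ku = -f$ with $u|_\Sigma = 0$ inverts $DR_g$, and evaluating your $\lambda$ on this right inverse identifies the multiplier, up to a constant, with the solution $N$ of \eqref{eqofN}. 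The paper short-circuits all of this: it \emph{defines} $N$ by \eqref{eqofN} at the outset (the trace of \eqref{staticeq} forces this choice, since $\la g, -(\Delta_g N)g + \nabla^2_g N - N \Ric(g)\ra_g = -[(n-1)\Delta_g N + KN]$), and then, for an arbitrary compactly supported $\hat h$, deforms along $u(t)^{4/(n-2)}(g + t\hat h) \in \mathcal{M}^K_\gamma$; in the criticality identity the conformal correction $\frac{4}{n-2}u'(0)g$ pairs with the static operator to give exactly $-\frac{4}{n-2}u'(0)\lf[(n-1)\Delta_g N + KN\ri] = 0$. So staticity follows with no multiplier theorem, no closed range argument, and no regularity discussion at all.

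Second, the boundary matching. You only assert $\int_\Sigma \mathcal{B}(N,h)\,d\sigma = c\int_\Sigma N\, DH_g(h)\, d\sigma$ for variations with $h(\nu,\cdot)^\top = 0$, but your ``static $\Rightarrow$ critical'' direction quantifies over \emph{all} $h \in T_g\mathcal{M}^K_\gamma$, for which $h(\nu,\cdot)^\top$ is arbitrary. The cross terms involving $\nabla^\Sigma N$ and $X = h(\nu,\cdot)^\top$ do not vanish pointwise for such $h$; what saves the statement is the integration by parts on the closed surface $\Sigma$, namely $\int_\Sigma N \div_\gamma X = -\int_\Sigma \la \nabla^\Sigma N, X\ra_\gamma$ (the paper's \eqref{static-e4}), after which all boundary terms collapse to $\int_\Sigma \lf[\tr_g h - h(\nu,\nu)\ri](dN)_n\, d\sigma$, which vanishes because $h|_{T(\Sigma)}=0$ forces $\tr_g h = h(\nu,\nu)$ on $\Sigma$. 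This cancellation is integral, not pointwise, and it is precisely the computation you deferred as ``the main obstacle''; without it the forward direction of your argument is incomplete.
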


\begin{proof} Since the Dirichlet eigenvalue of  $ ( n - 1 ) \Delta_g + K $ is positive, we know
 $\mathcal{M}^K_\gamma $ is a manifold near $g$ by the result in \cite{MiaoTam08}.

First, we suppose $g $ is a static metric with a potential $N$ such that $N=\phi$ on $\Sigma$.
Let $g(t)$ be a smooth curve in $\mathcal{M}^K_\gamma$ with $g(0)=g$. Let
$$
F(t) = \int_\Sigma H(t) \phi \ d\sigma,
$$
where $H(t)$ is the mean curvature of $ \Sigma $ in $ (\Omega, g(t) ) $ with respect to the
outward unit normal $ \nu $. Let $`` \ ' \ " $ denote the derivative with respect to $ t $.
We want to prove that
$F'(0)=0$. Let $h=g'(0)$. In what follows, we let $\omega_n$ denote the outward unit normal part
of a $1$-form $\omega$, i.e. $ \omega_n = \omega ( \nu   ) $, let $\mathbb{II}$ be the
second fundamental form of $\Sigma$ in $ (\Omega, g(t) ) $ with respect to $ \nu $,
let $ X $ be the vector field on $ \Sigma $ that is dual to the $1$-form $ h(\nu, \cdot)|_{T (\Sigma)}$
on $ (\Sigma, \gamma )$  and let $ \div_\gamma X $ be the divergence of $ X $ on $ (\Sigma, \gamma)$.
For convenience, we  often omit writing the volume form in an integral.
As in   \cite[(34)]{MiaoTam08}, we have
\begin{equation}\label{static-e1}
\begin{split}
2 F^\prime (0) & =   \int_\Sigma 2 H^\prime (0) N  \\
&=\int_\Sigma N\lf([  d ( \tr_g h )  - \div_g h ]_n - \div_\gamma X
- \la \mathbb{II}, h \ra_\gamma\ri)  \\
& =   \int_\Sigma N [ d ( \tr_g h) - \div_g h ]_n  -\int_\Sigma N\div_\gamma X  \ \  \text{(because $ h |_{ T \Sigma } = 0 $)} \\
&=  \int_\Omega N\lf[\triangle_g ( \tr_g h ) - \div_g ( \div_g h )\ri]-\int_\Omega \tr_g h\Delta N +\int_\Sigma \tr_g h(dN)_n\\
&-\int_\Omega \la dN,\div_g h\ra_g-\int_\Sigma N\div_\gamma X \\
&=   -\int_\Omega  N\la h, \Ric(g) \ra_g -\int_\Omega \tr_g h\Delta N +\int_\Sigma \tr_g h(dN)_n\\
& + \int_\Omega \la \nabla^2_g N, h\ra_g - \int_\Sigma h( \nu , \nabla N)
-\int_\Sigma N\div_\gamma X\ \  \text{(using \eqref{static-e2} and \eqref{static-e3})} \\
& =   \int_\Omega  \la h, - N \Ric(g) - ( \Delta_g N ) g + \nabla^2_g N  \ra_g \\
& +\int_\Sigma \tr_g h(dN)_n
- \int_\Sigma h( \nu , \nabla N)
-\int_\Sigma N\div_\gamma X ,
\end{split}
\end{equation}
where  we have used the facts
\begin{equation}\label{static-e2}
 \int_\Omega  \la dN,\div_g h\ra_g =
- \int_\Omega \la \nabla^2_g N, h \ra_g + \int_\Sigma h( \nu , \nabla N)
\end{equation}
and
\begin{equation}\label{static-e3} D R_g ( h) = \frac{d}{dt}R(t)|_{t=0}=-  \triangle_g ( \tr_g h ) + \div_g ( \div_g h )
- \la h, \Ric(g) \ra_g = 0 .
\end{equation}
Here $R(t)$ is the scalar curvature of $g(t)$.
Let $ \nabla^\Sigma N $ denote the gradient of $ N $ on $ (\Sigma, \sigma)$.
Integrating by parts on $ \Sigma $, we have
 \begin{equation}\label{static-e4}
 \int_\Sigma N\div_\gamma X=-\int_\Sigma\la \nabla^\Sigma N,X\ra_\gamma=-\int_\Sigma h( \nu ,\nabla N)+\int_\Sigma h( \nu ,\nu )(dN)_n .
 \end{equation}
 On the other hand, $ \tr_gh=h( \nu , \nu) $ at $ \Sigma $.
Hence, $ F^\prime (0) = 0 $ by \eqref{staticeq}, \eqref{static-e1}, and \eqref{static-e4}.

To prove the converse, suppose $g$ is a critical point of $F_\phi (\cdot ) $.
Let  $ \{ g(t) \} $ be any smooth path in  $\mathcal{M}^K_\gamma $
 passing $ g = g(0) $. Let $ h = g^\prime (0) $ and $ F( t) = F_\phi ( g(t) ) $. As before, we have
 \begin{equation} \label{wantstatic1}
\begin{split}
2 F^\prime (0) & =   \int_\Sigma 2 H^\prime (0) \phi  \\
& =   \int_\Sigma \phi [ d ( \tr_g h) - \div_g h ]_n  -\int_\Sigma \phi \div_\gamma X   .
\end{split}
\end{equation}
Since the first Dirichlet  eigenvalue of $( n - 1 ) \Delta_g   +  K$ is positive
and $\phi$ is not identically zero, there exists a unique smooth function  $ N = N_\phi $ which is not identically zero on $ \Omega $ such that

\begin{equation} \label{eqofN}
\left\{
\begin{array}{rcc}
( n - 1 ) \Delta_g N + K N & = & 0, \ \ on \ \Omega \\
N & = & \phi, \ \ at \ \Sigma.
\end{array}
\right.
\end{equation}
With such an $ N$ given, we have
\begin{equation} \label{usingN}
\begin{split}
&  \int_\Sigma \phi [ d ( \tr_g h) - \div_g h ]_n
-\int_\Sigma \phi \div_\gamma X \\
= & \int_\Omega N \lf[\triangle_g ( \tr_g h ) - \div_g ( \div_g h )\ri]-\int_\Omega \tr_g h\Delta N +\int_\Sigma \tr_g h(d N )_n\\
&-\int_\Omega \la d N,\div_g h\ra_g-\int_\Sigma N \div_\gamma X \\
= & \int_\Omega N (-1) \la h, \Ric(g) \ra_g  -\int_\Omega \tr_g h\Delta N  + \int_\Omega \la \nabla^2_g N, h\ra_g ,
\end{split}
\end{equation}
where we used the fact $ DR_g ( h ) = 0 $ (the boundary
terms canceled as before and  we have not used \eqref{eqofN} yet).

Now let $ \hat h $ be any smooth symmetric (0,2) tensor with   compact support in $ \Omega $.
For each $ t $ sufficiently small,  we can find a smooth positive function $u(t)$ on $ \Omega $
such that $u(t)=1$ at $ \Sigma $  and
$$  g (t) = u(t)^{\frac{4}{ n-2} } (g + t \hat h) \in \mathcal{M}^K_\gamma . $$
Moreover, $ u(t) $ is differentiable at $ t = 0 $ and $ u(0) \equiv 1 $ on $ \Omega $.
See the proof of \cite[Theorem 5]{MiaoTam08} for details on the existence of such a $ u(t) $.
Now $ g^\prime (0)  = \frac{4}{n-2} u'(0) g + \hat h. $ Hence, by \eqref{wantstatic1} and \eqref{usingN} we have
\begin{equation}
\begin{split}
2 F'(0) & = \int_\Omega
\la \frac{4}{n-2} u'(0) g + \hat h, - N \Ric(g) - ( \Delta_g N ) g + \nabla^2_g N \ra_g  \\
& =  \int_\Omega
\la  \hat h, - N \Ric(g) - ( \Delta_g N ) g + \nabla^2_g N \ra_g  \\
& + \int_\Omega \frac{4}{n-2} u'(0)
\lf[  - K N  - n ( \Delta_g N )  + \Delta_g N \ri] .
\end{split}
\end{equation}
By \eqref{eqofN}, the second integral in the above equation is zero. Hence, we have
\be
2 F'(0)   =  \int_\Omega
\la  \hat h, - N \Ric(g) - ( \Delta_g N ) g + \nabla^2_g N \ra_g  .
\ee
Since $ \hat h $ can be  arbitrary, we conclude that $ g $ and $ N $ satisfy \eqref{staticeq}.
\end{proof}

\begin{rem}
If $ K \leq 0 $, then the condition that the first Dirichlet eigenvalue of
$ ( n - 1 ) \Delta_g + K $ is positive  holds automatically for $ g \in \mathcal{M}^K_\gamma $.
\end{rem}

As a direct corollary of Theorem \ref{static-t1}, we have

\begin{cor}\label{static-c1}
With the notations given as in Theorem \ref{static-t1}, suppose $K=0$ and $\phi=1$.  Then $g \in \mathcal{M}^0_\gamma $ is a critical point of $ \int_\Sigma H \ d \sigma $  if and only if $g$ is a Ricci flat metric.
In particular, if $n=3$, then $g \in \mathcal{M}^0_\gamma $ is a critical point of $\int_\Sigma H \ d \sigma  $ if and only if  $g$ is a flat metric.
\end{cor}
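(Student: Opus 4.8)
The plan is to read everything off Theorem~\ref{static-t1} and then determine what the static condition forces once $K=0$ and $\phi=1$. By that theorem, $g\in\mathcal{M}^0_\gamma$ is a critical point of $\int_\Sigma H\,d\sigma$ if and only if $g$ is a static metric with a static potential $N$ satisfying $N=1$ on $\Sigma$. So the whole task reduces to showing that, for a metric of zero scalar curvature, the existence of such a potential is equivalent to $\Ric(g)=0$.

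First I would take the $g$-trace of the static equation \eqref{staticeq}. Using $\tr_g g=n$, $\tr_g\nabla^2_g N=\Delta_g N$, and $\tr_g\Ric(g)=R$, this gives $-(n-1)\Delta_g N-NR=0$. Since $g\in\mathcal{M}^0_\gamma$ has $R=K=0$, the relation collapses to $\Delta_g N=0$, so $N$ is harmonic on $\Omega$. Together with the boundary condition $N=1$ on $\Sigma$, the maximum principle then forces $N\equiv1$ throughout $\Omega$.

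With $N\equiv1$ the Laplacian and Hessian terms in \eqref{staticeq} vanish, leaving $-\Ric(g)=0$; hence $g$ is Ricci flat. For the converse, if $\Ric(g)=0$ then the scalar curvature is zero so $g\in\mathcal{M}^0_\gamma$, and the constant $N\equiv1$ satisfies \eqref{staticeq} identically with boundary value $1=\phi$; Theorem~\ref{static-t1} then returns $g$ as a critical point. Finally, for $n=3$ the equivalence of Ricci flat and flat is the standard three-dimensional fact that the curvature tensor is algebraically determined by the Ricci tensor (the Weyl tensor vanishing identically in dimension three), so $\Ric(g)=0$ makes all sectional curvatures vanish.

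Since this is a corollary of Theorem~\ref{static-t1}, there is no real obstacle; the one point deserving attention is that the static potential is not free but is pinned to the constant $1$ by harmonicity and the boundary data, and it is exactly this rigidity that upgrades the static equation to $\Ric(g)=0$.
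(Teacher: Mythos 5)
Your proof is correct and is precisely the argument the paper leaves implicit when it calls this a ``direct corollary'' of Theorem~\ref{static-t1}: trace the static equation to get $-(n-1)\Delta_g N - NR = 0$, use $R=K=0$ and the boundary condition to pin $N\equiv 1$, and reduce \eqref{staticeq} to $\Ric(g)=0$, with the $n=3$ case following from the vanishing of the Weyl tensor in dimension three. No gaps; the converse direction and the observation that the first-eigenvalue hypothesis is automatic for $K=0$ are also handled correctly.
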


If $ \phi $ does not change sign on the boundary, we further have:

\begin{cor}\label{static-c2}
With the notations given as in Theorem \ref{static-t1}, suppose $ \phi \ge 0 $ or $\phi\le 0$ on $ \Sigma $.
Suppose $g \in \mathcal{M}^K_\gamma$ is a static metric. If $ K > 0 $, we also
assume that the first Dirichlet eigenvalue of
$ ( n - 1 ) \Delta_g + K $ is positive.   Let $g(t)$ be a smooth family of smooth metrics on $ \Omega $ with $g(0)=g$ such that
\begin{enumerate}
\item[(i)] the scalar curvature of $g(t)$ is at least $K$.
\item[(ii)] $g(t)$ induces $\gamma$ on $\Sigma$.
\end{enumerate}
Then
$$
\frac{d}{dt}F_\phi(g(t))|_{t=0}=0.
$$
\end{cor}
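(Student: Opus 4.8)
The plan is to reduce the statement to the first–variation identity already obtained inside the proof of Theorem~\ref{static-t1}; the one genuinely new ingredient is the infinitesimal condition $DR_g(h)=0$, which I will extract from the one–sided bound (i). Set $h=g'(0)$ and let $N=N_\phi$ be the unique solution of \eqref{eqofN}, which exists because the first Dirichlet eigenvalue of $(n-1)\Delta_g+K$ is positive (automatic for $K\le 0$). Since $g$ is static with potential equal to $\phi$ on $\Sigma$, the situation characterized in Theorem~\ref{static-t1}, tracing \eqref{staticeq} shows that this potential solves $(n-1)\Delta_g N+KN=0$ with boundary value $\phi$; by uniqueness it coincides with $N_\phi$, and hence $N=N_\phi$ itself satisfies the full static equation \eqref{staticeq}.

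First I would record the two linear constraints on $h$. Condition (ii) gives $h|_{T(\Sigma)}=0$ at once. For the interior constraint I would use that a static metric has constant scalar curvature (\cite[Proposition 2.3]{Corvino00}), so $R(g(0))\equiv K$ on $\Omega$; combined with (i), this says that for each fixed $x\in\Omega$ the smooth function $t\mapsto R(g(t))(x)$ attains an interior minimum at $t=0$, whence $DR_g(h)(x)=\frac{d}{dt}R(g(t))(x)|_{t=0}=0$. This is the step I expect to be the main obstacle, and it is the reason the hypotheses are phrased with an inequality: although $g(t)$ need not lie in $\mathcal{M}^K_\gamma$, the bound $R(g(t))\ge K=R(g)$ together with $g(t)|_{T(\Sigma)}=\gamma$ forces $h$ to satisfy exactly the two conditions $DR_g(h)=0$ and $h|_{T(\Sigma)}=0$ that characterize the tangent space of $\mathcal{M}^K_\gamma$ at $g$.

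Granting these two constraints, I would then run the integrations by parts \eqref{static-e1}--\eqref{usingN} verbatim with this $N=N_\phi$. Since the $t$–derivative of $F_\phi$ depends only on $h$ through the pointwise formula for $2H'(0)$, and since every appeal to $g(t)\in\mathcal{M}^K_\gamma$ in the proof of Theorem~\ref{static-t1} entered solely through $DR_g(h)=0$, the same cancellation of boundary terms (via \eqref{static-e4} and the identity $\tr_g h=h(\nu,\nu)$ on $\Sigma$, using $h|_{T(\Sigma)}=0$) goes through and yields
\[
2\,\frac{d}{dt}F_\phi(g(t))\Big|_{t=0}
=\int_\Omega\big\langle h,\,-N\Ric(g)-(\Delta_g N)g+\nabla^2_g N\big\rangle_g .
\]
Because $N$ is a static potential the integrand vanishes identically by \eqref{staticeq}, and therefore $\frac{d}{dt}F_\phi(g(t))|_{t=0}=0$.

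Finally, the sign hypothesis on $\phi$ places the computation inside the positivity circle of ideas and makes it robust. By the maximum principle, $\phi\ge 0$ (resp.\ $\phi\le 0$) forces $N_\phi\ge 0$ (resp.\ $\le 0$) on $\Omega$, so the potential is sign–definite, as in the model flat and hyperbolic cases $\phi\equiv 1$ and $\phi=\cosh r$. Moreover, retaining the term $-\int_\Omega N\,DR_g(h)$ that the identity $DR_g(h)=0$ let me discard, and using that $-N\Ric(g)-(\Delta_g N)g+\nabla^2_g N$ vanishes, the computation takes the general form $2\frac{d}{dt}F_\phi(g(t))|_{t=0}=-\int_\Omega N\,DR_g(h)$ for any $h$ with $h|_{T(\Sigma)}=0$; so if one only has a one–sided family, where the minimum argument gives merely $DR_g(h)\ge 0$, the sign of $N_\phi$ forces $\frac{d}{dt}F_\phi(g(t))|_{t=0}\le 0$, the infinitesimal form of the monotonicity in \cite{ShiTam02}.
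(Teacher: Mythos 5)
Your proof is correct, but it takes a genuinely different route from the paper's argument. The paper never linearizes the scalar curvature inequality: instead it pushes the whole family back into $\mathcal{M}^K_\gamma$, solving \eqref{deformation-e1} for positive functions $u(t)$ with $u(t)=1$ on $\Sigma$ so that $\hat g(t)=u(t)^{4/(n-2)}g(t)\in\mathcal{M}^K_\gamma$; the hypothesis $R(g(t))\ge K$ enters only through the maximum principle, which gives $\frac{\partial u}{\partial \nu_t}\ge 0$, hence $\hat H(t)\ge H(t)$, and the sign of $\phi$ then gives $F_\phi(\hat g(t))\ge F_\phi(g(t))$ with equality at $t=0$; finally Theorem \ref{static-t1} is applied as a black box to the in-space path $\hat g(t)$, and the derivative is transferred to $g(t)$ because the nonnegative difference $F_\phi(\hat g(t))-F_\phi(g(t))$ has a minimum at $t=0$. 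You instead apply the first-derivative test one level earlier, to the scalar curvature itself: $R(g(t))\ge K=R(g(0))$ forces $DR_g(h)=0$ pointwise, after which the computation \eqref{static-e1}--\eqref{usingN} inside the proof of Theorem \ref{static-t1} applies verbatim, since (as you correctly observe) the constraint $g(t)\in\mathcal{M}^K_\gamma$ enters that computation only through \eqref{static-e3}, while the boundary cancellations need only $h|_{T\Sigma}=0$; you rightly reuse the theorem's proof rather than its statement, which is necessary because $g(t)\notin\mathcal{M}^K_\gamma$. Both arguments rest on the same two implicit readings of the statement: the family is two-sided in $t$ (a one-sided family yields only an inequality in either approach), and ``static'' means static with a potential whose boundary value is $\phi$, as you assume -- without this the corollary fails, e.g.\ for the flat metric, $K=0$, and a positive $\phi$ that is not the restriction of an affine function, by the converse direction of the theorem. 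What your route buys: it is more elementary (no conformal deformation, no appeal to the existence and $t$-differentiability of $u(t)$ from \cite{MiaoTam08}, no maximum principle), and it reveals that the sign hypothesis on $\phi$ is superfluous for the two-sided conclusion; that hypothesis is needed only by the paper's method, or, as your closing identity $2F'(0)=-\int_\Omega N\,DR_g(h)$ correctly shows, to obtain the one-sided bound $F'(0)\le 0$ for one-sided families, recovering the infinitesimal form of the monotonicity in \cite{ShiTam02}.
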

\begin{proof} We prove the case that $\phi \ge0$ on $\Sigma$. The case that $\phi\le 0$ on $\Sigma$ is similar.

By the assumption of $g$, for $t$ small, we can find smooth positive functions $u(t)$ on $ \Omega$
with $u(t)=1$ on $\Sigma$ such that $\hat g(t)=u^{\frac{4}{n-2}}(t)g(t) \in \mathcal{M}^K_\gamma$,
 $ u(t) $ is differentiable at $ t = 0 $ and $ u(0) \equiv 1 $
(see the proof of Proposition 1 in \cite{MiaoTam08}). The mean curvature $\hat H(t)$ of $ \Sigma $
in $ (\Omega, \hat{g}(t) )$ is given by
\be
\hat H(t)=H(t)+\frac{ 2 ( n -1) }{n-2}\frac{\p u}{\p \nu_t },
\ee
where $H(t)$ and $ \nu_t $ are the mean curvature and the unit outward normal
of $ \Sigma $ in $(\Omega, g(t))$. Note that $u$ satisfies:
\begin{equation}\label{deformation-e1}
  \begin{cases}
     \frac{4(n-1)}{n-2} \Delta_{g(t)}u- K(t)u&=-Ku^{\frac{ n+2}{n-2}}, \text{\ in $\Omega$}\\
    u&=1,  \text{\ on $\Sigma$,}
\end{cases}
    \end{equation}
where  $K(t)$
is the scalar curvature of $g(t)$. Since $K(t)\ge K$, by the maximum principle, we have
$$
\frac{\p u}{\p \nu_t }\ge 0.
$$
Hence, $\hat H(t)\ge H(t)$ and consequently $F_\phi(\hat g(t))\ge F_\phi ( g(t) )$
by the assumption $ \phi \ge 0 $ on $ \Sigma$.
By Theorem \ref{static-t1}, we have
$$
\frac{d}{d t}F_\phi(\hat g(t))|_{t=0}=0.
$$
Since $ \hat g (0) = g (0) $,  we conclude
$$
\frac{d}{d t}F_\phi( g(t))|_{t=0}=0.
$$
\end{proof}

Here are some examples provided by Theorem \ref{static-t1}:

{\bf Example 1}:   Let $\Omega$ be a  bounded domain in $\R^n$ with smooth boundary $\Sigma$. Then the standard Euclidean metric is a critical point of $F_\phi(\cdot)$ with $\phi\equiv1$.  If $\Sigma$ is strictly convex, then this follows also from the result in \cite{ShiTam02}.

 {\bf Example 2}: Let $\Omega$ be a   bounded domain in $\H^n$ with smooth boundary $\Sigma$. Then the standard Hyperbolic metric is a critical point of $F_\phi(\cdot)$ with $\phi=\cosh r$, where $r$ is  the distance function on $ \H^n $ from a fixed point.  If $\Sigma$ is strictly convex and $n=3$, then this follows also from the result in \cite{ShiTam07}.

 {\bf Example 3}: Let $\Omega$ be a   domain in $\mathbb{S}^n$ with smooth boundary $\Sigma$ such that the volume of  $\Omega$  is less than  $2\pi$. Then the standard metric on $ \mathbb{S}^n $ is a critical point of $F_\phi(\cdot)$ with $\phi=\cos r$, where $r$ is  the distance function on $ \mathbb{S}^n $ from a  fixed point.

 {\bf Example 4}: Let $\Omega$ be a bounded domain with smooth boundary in the Schwarzschild manifold $\R^3\setminus\{0\}$ with metric
 $$
g_{ij}  =  \lf( 1+\frac{m}{2r} \ri)^4 \delta_{ij}
 $$
with $m>0$ and $r=|x|$. Then on $\Omega$  $ g $  is a critical point for $F_\phi(\cdot)$ with $\phi= (  1 - \frac m {2r} )/( 1 + \frac m {2r})$.

{\bf Example 5}: Complete conformally flat Riemannian manifolds with static metrics have been classified by Kobayashi in \cite[Theorem 3.1]{Kobayashi}.  In addition to the   manifolds in the previous examples, there are other kind of static metrics with $N$ being explicitly constructed.  Domains in these manifolds will be critical points of $F_\phi(\cdot)$ where $\phi$ is the restriction of $N$ to the boundary.  See \cite{Kobayashi} for more details.

\section{Derivative of the Brown-York mass}

In this section, we give a derivative formula that describes how the Brown-York mass of a surface
changes if the surface is evolving  in an ambient Riemannian manifold. Our main result is:

\begin{thm} \label{evolution-bymass}
Let $ \mathbb{S}^2 $ be the $2$-dimensional sphere.
Let $(M, g)$ be a $ 3 $-dimensional Riemannian manifold.
Let $ I $ be an open interval in $ \R^1$.
Suppose
$$
F : \mathbb{S}^2 \times I \longrightarrow M
$$
is a smooth map such that, for $ t \in I $,
\begin{enumerate}
\item[(i)] $ \Sigma_t = F(\mathbb{S}^2, t)$ is an embedded surface in $ M $ and $ \Sigma_t $ has positive Gaussian
curvature.
\item[(ii)] The velocity vector $ \frac{\p F }{\p t} $ is always perpendicular to $ \Sigma_t $, i.e
$$
\frac{ \p F}{\p t} = \eta \nu ,
$$
where $ \nu $ is a given unit vector field normal to $ \Sigma_t $ and $ \eta = \la \frac{\p F}{\p t} , \nu \ra$
denotes the speed of $ \S_t $ with respect to $ \nu $.
\end{enumerate}
Consider $ \mathfrak{m}_{_{\rm BY}} ( \Sigma_t ) $, the Brown-York mass of $ \Sigma_t $ in $(M, g)$, defined by
\be
\mathfrak{m}_{_{\rm BY}} (\Sigma_t) = \frac{1}{8\pi} \int_{\S_t} (H_0 - H) \ d \sigma_t,
\ee
where $H_0$ is the mean curvature of $\S_t $
with respect to the outward normal when isometrically embedded in $ \R^3 $, $ H $ is the mean curvature of $ \S_t $ with respect to $ \nu$ in $(M,g)$,
and $ d \sigma_t $ is the volume form of the induced metric on $ \S_t $.

We have
\be \label{formula-1}
\frac{ d  }{d t} \mathfrak{m}_{_{\rm BY}} ( \Sigma_t ) =
\frac{1}{ 16 \pi} \int_{\S_t} \lf( | A_0 - A |^2 - | H_0 - H |^2 + R \ri)  \eta \ d \sigma_t ,
\ee
where $ A_0 $ is  the second fundamental form of $ \S_t $ with respect to the outward normal when isometrically
embedded in $ \R^3 $, $ A $ is the second fundamental form of $ \S_t $ with respect to $ \nu $ in $ (M, g) $,
and $ R $ is the scalar curvature of $ (M, g) $.
\end{thm}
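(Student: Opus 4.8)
The plan is to compute the $t$-derivative of the two terms in $\mathfrak{m}_{_{\rm BY}}(\Sigma_t)$ separately, namely $\frac{d}{dt}\int_{\Sigma_t} H_0\, d\sigma_t$ and $\frac{d}{dt}\int_{\Sigma_t} H\, d\sigma_t$, and then combine them. The evolution is purely normal, $\frac{\partial F}{\partial t}=\eta\nu$, so I can use the standard first-variation formulas under a normal flow. For the second term, these are classical: the induced area element evolves by $\frac{\partial}{\partial t}d\sigma_t = -\eta H\, d\sigma_t$ (sign conventions depending on the orientation of $\nu$), and the mean curvature $H$ evolves by a formula of the form $\frac{\partial H}{\partial t} = -\Delta_{\Sigma_t}\eta - (|A|^2 + \mathrm{Ric}(\nu,\nu))\eta$. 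Integrating by parts kills the Laplacian term, so $\frac{d}{dt}\int_{\Sigma_t} H\, d\sigma_t$ reduces to an integral of $(-|A|^2 - \mathrm{Ric}(\nu,\nu) - H^2)\eta$ over $\Sigma_t$. I would then convert $\mathrm{Ric}(\nu,\nu)$ into the ambient scalar curvature $R$ using the Gauss equation on $\Sigma_t$: since $\Sigma_t$ is a surface, $2\mathrm{Ric}(\nu,\nu) = R - 2\kappa + |A|^2 - H^2$, where $\kappa$ is the Gaussian curvature; the Gauss--Bonnet theorem then handles the curvature integral.

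The term $\int_{\Sigma_t} H_0\, d\sigma_t$ is the more delicate one, since $H_0$ and the isometric embedding into $\R^3$ are defined only intrinsically through the induced metric $\gamma_t$ on $\Sigma_t$. Here the positive Gaussian curvature hypothesis is essential: by Weyl's embedding theorem each $(\mathbb{S}^2,\gamma_t)$ embeds isometrically and uniquely (up to rigid motion) in $\R^3$, and one needs that this family of embeddings depends smoothly on $t$ so that $H_0$ and its derivative make sense. This is precisely what the appendix (following Nirenberg) is set up to provide, so I would invoke that differentiability and then differentiate $\int_{\Sigma_t} H_0\, d\sigma_t$ as a functional of the evolving abstract metric. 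The key observation is that for a closed surface isometrically embedded in $\R^3$, the total mean curvature $\int H_0\, d\sigma$ is determined by the intrinsic metric alone, so its variation depends only on $\gamma_t' = \frac{\partial}{\partial t}\gamma_t$. Since the ambient flow is normal, $\gamma_t' = -2\eta A$ (the usual first variation of the induced metric), which ties the derivative back to the geometry of $\Sigma_t$ in $M$ rather than in $\R^3$. Computing $\frac{d}{dt}\int H_0\, d\sigma_t$ in terms of $A_0$ and $\gamma_t'$ should yield an integrand involving $|A_0|^2$ and $H_0^2$; here the Gauss equation in $\R^3$, $\kappa = \det A_0$, together with $|A_0|^2 = H_0^2 - 2\det A_0 = H_0^2 - 2\kappa$, lets me replace the Gaussian-curvature contribution again by Gauss--Bonnet and match the intrinsic pieces.

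I expect the main obstacle to be the $\int H_0\, d\sigma_t$ term: establishing and using the correct variational formula for the total mean curvature of the isometric image in $\R^3$ under a deformation of the intrinsic metric, and in particular showing that the Gaussian-curvature (total) terms from the $\R^3$ side and the $M$ side are \emph{equal} as integrals (each equals $4\pi$ by Gauss--Bonnet, since both surfaces are isometric and diffeomorphic to $\mathbb{S}^2$). Once that cancellation is arranged, the remaining integrands combine into $|A_0|^2 - H_0^2$ from the Euclidean side and $-|A|^2 + H^2$ plus the scalar-curvature term from the ambient side; collecting these and completing the squares should produce exactly $|A_0 - A|^2 - |H_0 - H|^2 + R$, after noting that the mixed terms $\langle A_0, A\rangle$ and $H_0 H$ must be handled carefully so that the cross terms assemble correctly. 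The crux is therefore the bookkeeping that turns the separate first variations into the symmetric, manifestly geometric integrand \eqref{formula-1}, relying throughout on the smooth dependence of the isometric embeddings guaranteed by the appendix.
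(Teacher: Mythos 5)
Your overall skeleton coincides with the paper's proof: split $\mathfrak{m}_{_{\rm BY}}$ into $\int H_0$ and $\int H$, treat the $M$-side by the standard normal-flow variation formulas, treat the $\R^3$-side by realizing the evolving induced metrics as a differentiable family of isometric embeddings (the appendix/Nirenberg material), and finish with the Gauss equations. However, the two mechanisms you propose for the crux would both fail. First, Gauss--Bonnet cannot be used anywhere here: every term in the derivative carries the weight $\eta$, so the curvature contributions appear as $\int_{\S_t} K \eta \, d\sigma_t$, and since $\eta$ is a nonconstant function on $\S_t$ this integral is not topological and does not equal $4\pi$. What actually kills the Gaussian-curvature terms is \emph{pointwise} cancellation: because the image in $\R^3$ is isometric to $\S_t$, Theorema Egregium guarantees that the two Gauss equations $2K = R - 2\Ric(\nu,\nu) + H^2 - |A|^2$ (in $M$) and $2K = H_0^2 - |A_0|^2$ (in $\R^3$) involve the \emph{same function} $K$, so $K$ is eliminated between them at each point, with the $\eta$ weight carried along untouched. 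This is exactly how the paper converts $\Ric(\nu,\nu)$ into $R$ and the Euclidean quantities.

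Second, your description of the $\R^3$-side derivative is wrong in a way that matters. The key identity --- which you never state, and which is the heart of the paper's argument (quoted from Wang--Yau, their Proposition 6.1) --- is
$$
\frac{d}{dt}\int_{\S_t} H_0 \, d\sigma_t \;=\; \frac12 \int_{\S_t} \lf( H_0 \, \tr_{\Sigma} h - \la A_0, h\ra \ri) d\sigma_t ,
$$
valid once one knows the metric variation $h$ is induced by an actual surface variation in $\R^3$; establishing that (differentiability of the family of embeddings, not merely their existence) is precisely the role of Proposition \ref{good-embedding}. With the paper's consistent conventions $h = +2\eta A$, $\p_t H = -\Delta\eta - (|A|^2+\Ric(\nu,\nu))\eta$, $\p_t \, d\sigma_t = H\eta\, d\sigma_t$ (your signs $h=-2\eta A$, $\p_t d\sigma_t=-H\eta\,d\sigma_t$ are mutually inconsistent with your stated $\p_t H$), this yields $\int_{\S_t}( H_0 H - \la A_0, A\ra)\eta \, d\sigma_t$ --- \emph{mixed} terms coupling the $\R^3$ and $M$ geometries --- not ``an integrand involving $|A_0|^2$ and $H_0^2$.'' The pure terms $|A_0|^2$ and $H_0^2$ enter only afterwards, through the pointwise Gauss-equation substitution above, and then completing squares gives $\tfrac12\lf(|A_0-A|^2 - (H_0-H)^2 + R\ri)\eta$. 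So while your plan is structurally the right one, as written it is missing the one formula the proof actually turns on, and replaces the pointwise elimination of $K$ by an integral identity that is false in the presence of the weight $\eta$.
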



Our proof of Theorem \ref{evolution-bymass} makes use of a recent formula of Wang and Yau (Proposition 6.1 in \cite{WangYau08}):

\begin{prop} \label{propwangyau}
Let $ \Sigma $ be an orientable closed embedded hypersurface in $ \R^{n+1} $. Let $ \{ \Sigma_t \}_{ | t | < \delta } $ be a smooth variation of
$ \Sigma $ in $ \R^{n+1}$. Then
\be \label{eqwangyau}
\frac{d}{dt} \lf( \int_{\Sigma_t} H_0 \, d \sigma_t \ri) |_{t = 0}  =  \frac12 \int_{\Sigma}
\lf( H_0 \tr_{\Sigma} h -  \la A_0, h \ra \ri)  d \sigma ,
\ee
where $ H_0 $ and $ A_0 $ are the mean curvature and the second fundamental form of $ \S $ with respect to the outward normal
in $ \R^{n+1} $,  $ h $ is the variation of the induced metric $ \sigma $ on $ \Sigma $,
$ \tr_{\Sigma} h = \la \sigma , h \ra $ denotes the trace of $ h $ with respect to $ \sigma $,
and $ d \sigma_t $, $ d \sigma $ denote the volume form on $ \Sigma_t $, $ \Sigma$.

\end{prop}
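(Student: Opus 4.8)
The plan is to compute the first variation of the total mean curvature functional directly and then reorganize the result so that it depends only on the variation $h$ of the induced metric. Write the variation field as $V = \partial_t F$ and decompose it along $\Sigma = \Sigma_0$ into normal and tangential parts, $V = f\nu + W$, where $f = \langle V, \nu\rangle$ and $W$ is tangent to $\Sigma$. My first observation is that both sides of \eqref{eqwangyau} are unchanged if $V$ is replaced by $f\nu$: the left-hand side is a geometric integral over the set $\Sigma_t$, so a purely tangential field $W$ merely reparametrizes $\Sigma$ and contributes nothing to $\frac{d}{dt}\int_{\Sigma_t} H_0\, d\sigma_t$; on the right-hand side, a tangential $W$ gives $h = \mathcal{L}_W\sigma$, and the terms $\frac12\int_\Sigma(H_0\tr_\Sigma(\mathcal{L}_W\sigma) - \langle A_0, \mathcal{L}_W\sigma\rangle)$ cancel after integrating by parts and invoking the Codazzi identity $\div_\Sigma A_0 = dH_0$, which holds precisely because the ambient space $\R^{n+1}$ is flat. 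Hence it suffices to establish \eqref{eqwangyau} for a purely normal variation $V = f\nu$.

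For a normal variation I would use two standard first-variation formulas. First, differentiating $\sigma_{ij} = \langle \partial_i F, \partial_j F\rangle$ and using the Weingarten relation $\partial_i\nu = (A_0)_i{}^k\partial_k F$ gives $h_{ij} = \partial_t\sigma_{ij} = 2f(A_0)_{ij}$; consequently $\tr_\Sigma h = 2fH_0$ and $\langle A_0, h\rangle = 2f|A_0|^2$, so the right-hand side of \eqref{eqwangyau} becomes $\int_\Sigma f\,(H_0^2 - |A_0|^2)\, d\sigma$. Second, the volume form evolves by $\partial_t(d\sigma_t) = (\div_\Sigma V)\, d\sigma_t = fH_0\, d\sigma$, and the mean curvature evolves by the Jacobi-type formula $\partial_t H_0 = -\Delta_\Sigma f - |A_0|^2 f$, with no ambient Ricci term since $\R^{n+1}$ is flat.

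Combining these, the left-hand side of \eqref{eqwangyau} is
\[
\frac{d}{dt}\int_{\Sigma_t} H_0\, d\sigma_t\Big|_{t=0} = \int_\Sigma(\partial_t H_0)\, d\sigma + \int_\Sigma H_0\,\partial_t(d\sigma) = \int_\Sigma\bigl(-\Delta_\Sigma f - |A_0|^2 f + H_0^2 f\bigr)\, d\sigma,
\]
and since $\int_\Sigma\Delta_\Sigma f\, d\sigma = 0$ on the closed surface $\Sigma$, this equals $\int_\Sigma f\,(H_0^2 - |A_0|^2)\, d\sigma$, which matches the right-hand side computed above. By the reduction of the first paragraph, this proves \eqref{eqwangyau} in general.

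I expect the main obstacle to be the careful derivation, with signs fixed relative to the outward-normal convention, of the first variation of mean curvature $\partial_t H_0 = -\Delta_\Sigma f - |A_0|^2 f$, since it requires tracking the simultaneous variation of the unit normal, the induced metric, and the second fundamental form; a quick check on a round sphere ($H_0 = n/r$, $|A_0|^2 = n/r^2$) is useful for pinning down the signs. The other genuinely substantive point is the vanishing of the tangential contribution on the right-hand side, which is exactly where the flatness of $\R^{n+1}$ enters through Codazzi; the remaining steps are routine integrations by parts on the closed surface $\Sigma$.
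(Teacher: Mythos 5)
Your proof is correct, but note that this paper does not actually prove Proposition \ref{propwangyau}: it is quoted from Wang--Yau \cite{WangYau08} (Proposition 6.1 there) and used as a black box, so the relevant comparison is with that reference rather than with an argument appearing in this paper. Your argument is the natural self-contained one, and it checks out: the reduction to normal variations is legitimate (the left-hand side is parametrization-invariant, while on the right-hand side the tangential contribution $h = \mathcal{L}_W\sigma$ integrates to zero after integration by parts against the traced Codazzi identity $\div A_0 = dH_0$, which holds because $\R^{n+1}$ is flat); and for $V = f\nu$ the three first-variation formulas you invoke --- $h = 2fA_0$, $\p_t(d\sigma_t) = fH_0\, d\sigma$, and $\p_t H_0 = -\Delta_\Sigma f - |A_0|^2 f$ --- are all correct under the stated outward-normal convention (your round-sphere check does pin down the signs), after which both sides equal $\int_\Sigma f\,(H_0^2 - |A_0|^2)\, d\sigma$. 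Two points worth making explicit in a final write-up: (i) the right-hand side of \eqref{eqwangyau} is linear in $h$, and $h = 2fA_0 + \mathcal{L}_W\sigma$ for a general velocity $f\nu + W$, which is what allows the two reductions to combine; (ii) closedness and orientability of $\Sigma$ are used for the existence of a global unit normal and to discard $\int_\Sigma \Delta_\Sigma f \, d\sigma = 0$. What your route buys, beyond the citation the authors rely on, is a proof of exactly the statement needed; it is also the direct companion of the computation the authors do carry out in proving Theorem \ref{evolution-bymass}, where the same evolution equations \eqref{eq-c} and \eqref{eq-d} appear for surfaces moving in $(M,g)$, with the ambient Ricci term that vanishes in your Euclidean setting.
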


In order to apply Proposition \ref{propwangyau}, we will need to show that, on a closed convex
surface $ \Sigma $ in $ \R^3 $,  an abstract  metric variation on $ \Sigma $ indeed arises from a
surface variation $ \{ \Sigma_t \}$  of $ \Sigma $ in $ \R^3 $.
Precisely, we have:

\begin{prop} \label{good-embedding}
Given an integer $ k \geq 6 $ and a number $ 0 < \alpha < 1 $,
let $ \{ \sigma(t) \}_{|t|<1}$ be a path of $ C^{k, \alpha} $ metrics on $ \mathbb{S}^2$
such that $ \{ \sigma(t) \} $ is differentiable at $ t = 0 $ in the space of $C^{k, \alpha}$
metrics. Suppose  $ \sigma(0) $ has positive Gaussian curvature.
Then there exists a small number $ \delta > 0$  and
a path  of $ C^{k, \alpha} $ embeddings $\{ f (t) \}_{| t | < \delta} $ of $ \mathbb{S}^2 $ in $ \R^3 $ such that
$ f(t) $ is an isometric embedding of $(\mathbb{S}^2, \sigma(t))$ for  $ |t| < \delta $
and $ \{ f(t) \} $ is differentiable at $ t=0$ in the space of
$ C^{2, \alpha}$ embeddings.
\end{prop}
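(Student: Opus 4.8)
The plan is to read the statement as a differentiable-dependence result for the Weyl isometric embedding problem and to prove it by linearizing the embedding map and inverting that linearization, in the spirit of Nirenberg \cite{Nirenberg}.

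First I would handle each fixed $t$. Since $k\ge 2$ and $\{\sigma(t)\}$ is continuous in $C^{k,\alpha}$ near $t=0$ (being differentiable there), the Gaussian curvature of $\sigma(t)$, which is built from two derivatives of the metric, is close to that of $\sigma(0)$ in $C^{k-2,\alpha}$; hence after shrinking $\delta$ we may assume $\sigma(t)$ has positive Gaussian curvature for all $|t|<\delta$. For each such $t$, the solution of the Weyl problem (Nirenberg, Pogorelov) gives a $C^{k,\alpha}$ isometric embedding $f(t)$ of $(\mathbb{S}^2,\sigma(t))$ onto a closed convex surface $\Sigma_t\subset\R^3$, and by the rigidity of closed convex surfaces this embedding is unique up to a rigid motion of $\R^3$. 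I would remove this ambiguity by fixing a gauge --- prescribing the image of a base point $p\in\mathbb{S}^2$ together with the oriented tangent plane there --- so that $f(t)$ is singled out uniquely.

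The analytic core is the linearization at $f_0=f(0)$ of the map $\Phi(f)=f^*g_e$ sending an embedding to its first fundamental form. Decomposing a variation as $\dot f=\tau+a\,\nu$ with $\tau$ tangent to $\Sigma_0$, $a$ a function, and $\nu$ the unit normal, one computes
\[ (D\Phi_{f_0}\dot f)_{ij}=\nabla_i\tau_j+\nabla_j\tau_i-2a\,\mathbb{II}_{ij}, \]
where $\nabla$ and $\mathbb{II}$ are the connection and second fundamental form of $\Sigma_0$. By the infinitesimal rigidity of convex surfaces (Cohn--Vossen) the kernel of $D\Phi_{f_0}$ consists exactly of infinitesimal rigid motions, which the gauge kills; and because $\mathbb{II}>0$ the reduced second-order scalar equation for the normal part $a$ is elliptic, so $D\Phi_{f_0}$ is surjective onto symmetric $2$-tensors. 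Thus on the normalized slice $D\Phi_{f_0}$ is an isomorphism, and Schauder estimates for the reduced elliptic equation show that, given $\dot\sigma\in C^{k,\alpha}$, the solution $\dot f$ of $D\Phi_{f_0}\dot f=\dot\sigma$ is at least of class $C^{2,\alpha}$; this is precisely the regularity needed in Section 3 to differentiate curvature quantities in $t$.

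With $D\Phi_{f_0}$ invertible on the slice, the implicit function theorem makes the normalized solution operator $\sigma\mapsto f$ differentiable from a $C^{k,\alpha}$ neighborhood of $\sigma(0)$ into $C^{2,\alpha}$ embeddings, and composing with the assumed differentiability of $t\mapsto\sigma(t)$ at $t=0$ gives, by the chain rule, that $t\mapsto f(t)$ is differentiable at $t=0$ in $C^{2,\alpha}$ with $\dot f=(D\Phi_{f_0})^{-1}\dot\sigma$. (Alternatively, the uniform $C^{k,\alpha}$ a priori bounds for the Weyl problem let one extract $C^{2,\alpha}$-convergent limits of the difference quotients $t^{-1}(f(t)-f(0))$; each limit solves $D\Phi_{f_0}\dot f=\dot\sigma$, and uniqueness modulo rigid motions --- removed by the gauge --- forces the full family to converge.) The main obstacle is the invertibility of $D\Phi_{f_0}$ in the correct H\"older spaces, which rests on the two classical but delicate facts for closed convex surfaces, namely infinitesimal rigidity and solvability of the linearized system via the ellipticity forced by $\mathbb{II}>0$, together with the bookkeeping that, since $f_0$ is only $C^{k,\alpha}$ and the coefficients of the reduced operator involve $\mathbb{II}\in C^{k-2,\alpha}$, one must check that the elliptic theory still yields a solution at least of class $C^{2,\alpha}$; the hypothesis $k\ge 6$ leaves ample room for this.
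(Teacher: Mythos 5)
Your plan hinges on applying the Banach-space implicit function theorem to the map $\Phi(f)=f^*g_e$ on a gauge-fixed slice, and this is where the argument breaks: the Weyl problem suffers a loss of derivatives that makes $D\Phi_{f_0}$ fail to be an isomorphism between the spaces on which $\Phi$ acts, no matter how the gauge is chosen. Indeed $\Phi$ maps $C^{m,\alpha}$ embeddings to $C^{m-1,\alpha}$ metrics, so the IFT would require $(D\Phi_{f_0})^{-1}$ to carry $C^{m-1,\alpha}$ symmetric tensors back into $C^{m,\alpha}$ vector fields, i.e.\ to \emph{gain} one derivative. It does not: in Nirenberg's reduction \cite{Nirenberg}, the solution $Y$ of $2\,df_0\cdot dY=h$ is recovered from a scalar $\phi$ solving a second-order elliptic equation ((6.15) in \cite{Nirenberg}) whose right-hand side involves two derivatives of $h$ and three of $f_0$, so that $h\in C^{k,\alpha}$ and $f_0\in C^{k,\alpha}$ yield only $Y\in C^{k-1,\alpha}$ --- the solution operator \emph{loses} a derivative rather than gaining one. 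The hypothesis $k\ge 6$ gives room for bookkeeping but cannot repair this structural mismatch; it is precisely why the openness half of the Weyl problem is proved by Nirenberg's iteration exploiting the quadratic error $(dZ)^2$ (or by Nash--Moser in modern treatments) and not by the plain IFT, and your citation of infinitesimal rigidity and ellipticity, while correct, does not substitute for the missing isomorphism. Your fallback via difference quotients has a related hidden gap: uniform $C^{k,\alpha}$ \emph{a priori} bounds on the embeddings $f(t)$ do not bound the quotients $t^{-1}(f(t)-f(0))$; for that you need a Lipschitz stability estimate of the form $\|f(t)-f(0)\|_{C^{2,\alpha}}\le C\|\sigma(t)-\sigma(0)\|_{C^{2,\alpha}}$, which is exactly the nontrivial input you are assuming rather than proving, and even then $C^{2,\alpha}$-bounded quotients only subconverge in $C^{2,\beta}$ for $\beta<\alpha$, short of the claimed $C^{2,\alpha}$ differentiability.

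For contrast, the paper never inverts $D\Phi_{f_0}$. It solves the linearized equation once at $t=0$ (Nirenberg's Theorem 2$'$) to get $Y\in C^{k-1,\alpha}$, forms the affine path $G(t)=X+tY$, and notes that its induced metric $\tau(t)=G(t)^*(g_e)$ agrees with $\sigma(t)$ to first order, so $\|\tau(t)-\sigma(t)\|_{C^{2,\alpha}}$ is $o(t)$. It then corrects $G(t)$ to an exact isometric embedding $X(t)$ of $\sigma(t)$ with $\|G(t)-X(t)\|_{C^{2,\alpha}}\le C\|\tau(t)-\sigma(t)\|_{C^{2,\alpha}}$, which forces differentiability at $t=0$ with derivative $Y$ --- no gauge fixing or Cohn--Vossen rigidity enters. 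The real technical work, done in the Appendix (Lemmas \ref{lma-ap-2} and \ref{lma-ap-3}, via a compactness argument), is that the constants in Nirenberg's stability lemma can be made \emph{uniform} over base metrics near $\sigma(0)$: this matters because the lemma is applied with the varying base metric $\tau(t)$ (and $k\ge 6$ is what keeps $\tau(t)\in C^{4,\alpha}$ so the lemma applies). Your proposal has no counterpart for this uniformity, which is the actual content of the proposition.
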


The proposition above follows from the arguments by Nirenberg in \cite{Nirenberg}. For completeness, we
include its proof here.

\begin{proof}

Given $\{ \sigma (t) \}_{ | t |< 1} \}$, a path of $ C^{k, \alpha} $ metrics on $\mathbb{S}^2$, let $h=\sigma'(0)$.
Then $ h$ is a $C^{k, \alpha}$ symmetric (0,2) tensor. Since $ \sigma (0 ) $ has positive Gaussian curvature,
by the result in \cite{Nirenberg}, there exists a $ C^{k, \alpha}$ isometric embedding  of $(\mathbb{S}^2, \sigma(0) )$ in $ \R^3 $,
which we denote by $ X $.
Given such an $ X $, let $ Y : \mathbb{S}^2 \rightarrow \R^3 $  be a $C^{2, \alpha}$ solution to
the linear equation
\be \label{eq-linearembedding}
2dX\cdot dY=h ,
\ee
where $ ``  \cdot " $ denotes the Euclidean dot product in $ \R^3 $ and \eqref{eq-linearembedding} is understood as
$$ d X ( e_1) \cdot d Y ( e_2 ) + d X ( e_2 ) \cdot d Y ( e_1 ) = h ( e_1, e_2) $$
for any tangent vectors $ e_1$, $e_2 $ to $ \mathbb{S}^2 $.
The existence of such a $ Y$ is provided by Theorem 2' in \cite{Nirenberg}.
Let $ d \bar{\sigma}^2 = h $ and let $ \phi$, $ p_1$, $ p_2 $ be given as
in  (6.5),  (6.6) in \cite{Nirenberg}, then $ \phi $ satisfies
(6.15) in \cite{Nirenberg}. Using the fact that $ X$ is in $C^{k, \alpha}$ and
$ d \bar{\sigma}^2$ is in $C^{k, \alpha}$, we check that the coefficients of
(6.15) in \cite{Nirenberg} (when written in a non-divergence form) is in $C^{k-3, \alpha}$.
Thus, it follows from (6.15) in \cite{Nirenberg} that $ \phi \in C^{k-1, \alpha}$, from which
we conclude $ Y \in C^{k-1, \alpha}$ by (6.11)-(6.13) in \cite{Nirenberg}.

Now consider the $C^{k-1, \alpha}$ path of embeddings $\{ G(t) \}_{| t |< t_0}$,
where
\be
G(t) = X + t Y
\ee
and $t_0 $ is chosen so that $ G(t) $ is an embedding.
Let $ g_e $ be the Euclidean metric on $ \R^3$. The pull back metric
$\tau(t) = G(t)^*(g_e)$ which is in $C^{k-2, \alpha} $ satisfies
\be
\tau (0) = \sigma (0) , \ \
\tau^\prime (0) = \sigma^\prime (0) ,
\ee
which implies
\be
||\tau(t)-\sigma(t)||_{C^{2, \alpha} }=O(t^2) .
\ee
Apply Lemma \ref{lma-ap-3} in the Appendix to $ \sigma^0 = \sigma(0) = \tau(0)$, for each $ t $ sufficiently small,
we can find a $ C^{2, \alpha} $ isometric embedding $X(t)$ of $(\mathbb{S}^2, \sigma(t))$ in $ \R^3 $  such that
\be \label{eq-ap-quadratic}
||G(t)-X(t)||_{C^{2,\alpha}} \le C || \tau(t) - \sigma(t) ||_{C^{2, \alpha} } = O ( t^2 ).
\ee
(By Lemma 1' in \cite{Nirenberg}, $ X(t) $ indeed lies in $ C^{k, \alpha}$.)
It follows from \eqref{eq-ap-quadratic} that $ \{ X(t) \}$, when viewed as a path
in the space of $ C^{2, \alpha}$ embeddings, is differentiable at $ t=0 $. Proposition \ref{good-embedding} is therefore proved.
\end{proof}

Proposition \ref{propwangyau} and Proposition \ref{good-embedding} together imply:

\begin{prop} \label{gderivative}
Given an integer $ k \geq 6 $ and a number $ 0 < \alpha < 1 $,
suppose $ \{ \sigma(t) \}_{|t|<1}$ is a differentiable path in the space of
 $ C^{k, \alpha}$ metrics on $ \mathbb{S}^2$. Suppose  $ \sigma(t) $ has
 positive Gaussian curvature for each $ t $. Let $ H_0 $ be the mean curvature
of the isometric embedding of $(\mathbb{S}^2, \sigma(t))$ in $ \R^3 $ with respect to the outward normal. Let
$ d \sigma_t $ be the volume form of $ \sigma (t) $.
Then
$$
\int_{\mathbb{S}^2} H_0 \ d \sigma_t
$$
is a differentiable function of $ t $, and
\be \label{eqwangyau-2}
\frac{d}{dt} \lf( \int_{\mathbb{S}^2} H_0 \ d \sigma_t \ri)   =  \frac12 \int_{\mathbb{S}^2}
  \la H_0 \sigma(t) - A_0, h \ra \ra \ d \sigma_t ,
\ee
where  $ A_0 $ is the second fundamental form of the isometric embedding
of $(\mathbb{S}^2, \sigma(t))$ in $ \R^3$  with respect to the outward  normal,  $ h = \sigma^\prime (t) $,
$`` \la \cdot, \cdot \ra " $ denotes the metric product with respect to $\sigma (t) $ on the
space of symmetric $(0,2)$ tensors.
\end{prop}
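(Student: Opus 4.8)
The plan is to deduce this directly by combining the Wang--Yau formula (Proposition \ref{propwangyau}) with the realization result (Proposition \ref{good-embedding}), the latter being precisely what converts an \emph{abstract} metric variation on $\mathbb{S}^2$ into an honest surface variation in $\R^3$. Since differentiability and the asserted formula are local statements in $t$, I would first fix an arbitrary $t_0 \in (-1,1)$ and reduce to $t_0 = 0$ by reparametrizing the path as $s \mapsto \sigma(t_0 + s)$; it then suffices to prove that $\int_{\mathbb{S}^2} H_0 \, d\sigma_t$ is differentiable at $t_0$ with the claimed derivative.

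Next I would invoke Proposition \ref{good-embedding} for the (reparametrized) path $\{\sigma(t)\}$, which has positive Gaussian curvature and is differentiable at $t=0$: this produces a path $\{f(t)\}$ of $C^{k,\alpha}$ isometric embeddings of $(\mathbb{S}^2, \sigma(t))$ into $\R^3$ that is differentiable at $t=0$ in the space of $C^{2,\alpha}$ embeddings. Setting $\Sigma_t = f(t)(\mathbb{S}^2)$ then gives a genuine variation of closed convex surfaces in $\R^3$ whose induced (pulled-back) metric is exactly $\sigma(t)$; in particular the variation of the induced metric is $h = \sigma'(0)$, independently of the choice of realization. Applying Proposition \ref{propwangyau} to $\{\Sigma_t\}$ yields
$$
\frac{d}{dt}\Big( \int_{\mathbb{S}^2} H_0 \, d\sigma_t \Big)\Big|_{t=0} = \frac12 \int_{\mathbb{S}^2} \big( H_0 \tr_{\sigma(0)} h - \langle A_0, h\rangle \big) \, d\sigma_0 ,
$$
and since $\tr_{\sigma(0)} h = \langle \sigma(0), h\rangle$, the integrand equals $\langle H_0 \sigma(0) - A_0, h\rangle$, which is the claimed formula at $t_0 = 0$. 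As $t_0$ was arbitrary, this gives the identity for every $t$.

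The one point that requires care --- and which I expect to be the main obstacle --- is the mismatch in regularity: Proposition \ref{propwangyau} is stated for a \emph{smooth} variation, whereas Proposition \ref{good-embedding} only delivers a path $\{f(t)\}$ that is $C^{2,\alpha}$-differentiable at $t=0$. To close this gap I would argue that the first-variation computation underlying Proposition \ref{propwangyau} is algebraic in the derivatives of the embedding up to second order, hence valid under exactly this hypothesis. Concretely, differentiability of $\{f(t)\}$ at $t=0$ in $C^{2,\alpha}$ means that $\partial f(t)$ and $\partial^2 f(t)$ are differentiable in $t$ in $C^{1,\alpha}$ and $C^{0,\alpha}$ respectively; since both $H_0$ and the area element $d\sigma_t$ are smooth functions of $\partial f(t)$ and $\partial^2 f(t)$ on the open set of immersions, the map $t \mapsto \int_{\mathbb{S}^2} H_0 \, d\sigma_t$ is differentiable, and the chain rule produces precisely the Wang--Yau integrand with $h = \sigma'(0)$. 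Equivalently, one may view $f \mapsto \int_{\mathbb{S}^2} H_0 \, d\sigma$ as a Fr\'echet-differentiable functional on $C^{2,\alpha}$ embeddings whose differential is given by Proposition \ref{propwangyau}, and then compose with the differentiable curve $t \mapsto f(t)$. This simultaneously establishes the differentiability asserted in the statement and identifies the derivative, completing the proof.
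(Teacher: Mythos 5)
Your proposal is correct and follows essentially the same route as the paper's own proof: fix $t_0$, use Proposition \ref{good-embedding} to realize the abstract metric path as a path of isometric embeddings differentiable at $t_0$ in $C^{2,\alpha}$, observe that $H_0$ and the area element depend only on derivatives of the embedding up to second order, and then invoke Proposition \ref{propwangyau}. Your extra care in justifying why the Wang--Yau first-variation formula still applies under only $C^{2,\alpha}$-differentiability at a single parameter value (rather than a smooth variation) is a worthwhile elaboration of a step the paper treats in one sentence, but it is the same argument, not a different one.
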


\begin{proof}
Take any $ t_0 \in (-1, 1)$.
By Proposition \ref{good-embedding}, there exists a small positive number $ \delta$
(depending on $ t_0$) and a path of $ C^{k, \alpha}$
embeddings $ \{ f(t) \}_{ | t - t_0 | < \delta } $ of $ \mathbb{S}^2 $ in $ \R^3 $, such that
$ f(t) $ is an isometric embedding of $(\mathbb{S}^2, \sigma(t) )$
and $ \{ f(t ) \}$ is differentiable at $ t = t_0 $ in the space of
$C^{2, \alpha}$ embeddings.

Let $ \Sigma_t = f(t) (\mathbb{S}^2) $, let $ H_0 (t) $ be the mean curvature of $ \Sigma_t $ with respect to the outward normal
in $ \R^3 $,  by definition we have
\be \label{eq-realization}
 \int_{\mathbb{S}^2} H_0 \ d \sigma_t = \int_{\Sigma_t} H_0 (t) \ d \sigma_t, \ \ \forall \  | t - t_0 | < \delta .
\ee
Apply the fact that  $ \{ f(t ) \}$ is differentiable at $ t = t_0 $ in the space of $C^{2, \alpha}$ embeddings
and note that $ H_0$ only involves derivatives of $ f(t)$  up to the second order, we
conclude that $ \int_{\Sigma_t} H_0 (t) \ d \sigma_t $ is differentiable at $ t_0 $.
By \eqref{eq-realization},  $ \int_{\mathbb{S}^2} H_0 \ d \sigma_t$ is
differentiable at $ t_0$ as well.
This shows $  \int_{\mathbb{S}^2} H_0 \ d \sigma_t $ is a differentiable function of $ t$.
Equation  \eqref{eqwangyau-2} then follows directly from  \eqref{eq-realization}  and
Proposition \ref{propwangyau}.
\end{proof}

We are now ready to prove Theorem \ref{evolution-bymass} using Proposition \ref{gderivative}.

\vspace{.2cm}

\noindent {\em Proof of Theorem \ref{evolution-bymass}.}  By Proposition \ref{gderivative}, the  function
$ \mathfrak{m}_{_{\rm BY}} (\Sigma_t ) $ is a differentiable function of $ t $. We have
\be \label{eq-d1}
\begin{split}
\frac{d}{dt} \mathfrak{m}_{_{\rm BY}} ( \S_t) = & \ \frac{1}{ 8 \pi} \frac{ d}{dt}
\lf( \int_{\S_t} H_0 \ d \sigma_t \ri )
-\frac{1}{8 \pi} \frac{d}{dt} \lf( \int_{\S_t} H \ d \sigma_t \ri) .
\end{split}
\ee
Let $ \sigma = \sigma (t) $ be the induced metric on $ \S_t $.
By \eqref{eqwangyau-2} in Proposition \ref{gderivative}, we have
\be
\begin{split}
\frac{d}{dt} \lf( \int_{\S_t} H_0 \ d \sigma_t \ri) = \ &
 \frac12 \int_{\S_t}
  \la H_0 \sigma(t) - A_0, \frac{\p \sigma}{\p t}   \ra \ d \sigma_t .
\end{split}
\ee
Now, applying the fact that $ \{ \S_t \}$ evolves in $ (M, g)$ according to
\be
\frac{ \p F}{\p t} = \eta \nu ,
\ee
we have
\be \label{eq-c}
\frac{ \p \sigma}{\p t} = 2 \eta A ,
\ee
and
\be \label{eq-d}
\frac{ \p H}{ \p t} = -  \Delta \eta - ( | A |^2 + \Ric(\nu, \nu) ) \eta ,
\ee
where $ \Ric(\nu, \nu)$ is the Ricci curvature of $ (M, g) $ along $ \nu $.
Thus,
\be \label{eq-d2}
\begin{split}
\frac{d}{dt} \lf( \int_{\S_t} H_0 \ d \sigma_t \ri) = \ &
 \frac12 \int_{\S_t}
  \la H_0 \sigma(t) - A_0, 2 \eta A \ra \ d \sigma_t \\
  = \ &
  \int_{\S_t}
    H_0 H \eta  -   \la A_0,  A \ra \eta  \ d \sigma_t \\
\end{split}
\ee
and
\be \label{eq-d3}
\begin{split}
\frac{d}{dt} \lf( \int_{\S_t} H \ d \sigma_t \ri) = & \
\int_{\S_t} \frac{ \p H}{\p t} + H^2 \eta \ d \sigma_t \\
= &  \
 \int_{\S_t}  - ( | A |^2 + \Ric(\nu, \nu) ) \eta
+  H^2 \eta \ d \sigma_t .
\end{split}
\ee
Hence, it follows from \eqref{eq-d1}, \eqref{eq-d2} and \eqref{eq-d3} that
\be \label{eq-dbym2}
\begin{split}
 \frac{d}{dt} \mathfrak{m}_{_{\rm BY}} ( \S_t ) = \frac{1}{8 \pi} & \
\int_{\S_t}
   \lf[ H_0 H   -   \la A_0,  A \ra
    + ( | A |^2 + \Ric(\nu, \nu) ) -   H^2  \ri] \eta \ d \sigma_t .
\end{split}
\ee
Apply the Gauss equation to $ \Sigma_t $ in $ (M,g) $ and to the isometric embedding of $ \Sigma_t $ in $ \R^3 $ respectively, we have
\be \label{eq-gaussm}
2 K = R - 2 \Ric(\nu, \nu) + H^2 - | A|^2 ,
\ee
and
\be \label{eq-gausse}
2 K = (H_0)^2 - | A_0 |^2 ,
\ee
where $ K $ is the Gaussian curvature of $ \S_t $.
Hence, \eqref{eq-dbym2}-\eqref{eq-gausse} imply that
\be \label{eq-h}
\begin{split}
\frac{d}{dt} \mathfrak{m}_{_{\rm BY}} ( \S_t)
 = & \  \frac{1}{ 16 \pi} \int_{\S_t}
\lf[ | A_0 - A |^2 - ( H_0 - H )^2 + R \ri] \eta \ d \sigma_t .
\end{split}
\ee
Therefore, \eqref{formula-1} is proved.   \stop

\vspace{.3cm}

Next, we want to discuss some applications of Theorem \ref{evolution-bymass}.
The first two applications below put the monotonicity property of the Brown-York mass
in the construction in \cite{ShiTam02} into a more general context.

\begin{cor} \label{form2}
Let $ (M, g)$, $ I $, $ F $,  $ \{ \Sigma_t \}$, $ \eta$,  $ A$, $ A_0 $, $ H $ and $ H_0$ be given as in Theorem \ref{evolution-bymass}
with $ \eta > 0 $. Suppose at each point $x\in \Sigma_t$, $t\in I$, $A_0-A$ is either positive semi-definite or negative semi-definite,
and $R\le 0$, then
$\mathfrak{m}_{_{\rm BY}}(\Sigma_t)$ is nonincreasing in $t$.
If in addition, $ A = \alpha A_0 $ for some number $ \alpha $ depending on $ x \in \Sigma_t$,
then $\mathfrak{m}_{_{\rm BY}}(\Sigma_t)$ is constant in $I$ if and only if $(\mathbb{S}^2\times I, F^*(g))$ is a domain in $\R^3$.
\end{cor}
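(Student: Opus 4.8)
The plan is to extract both conclusions directly from the derivative identity \eqref{formula-1} by reading its integrand pointwise. First I would dispose of the monotonicity claim. At a point of $\Sigma_t$, let $\lambda_1,\lambda_2$ be the eigenvalues of the symmetric tensor $A_0-A$ with respect to the induced metric. Then $H_0-H=\tr(A_0-A)=\lambda_1+\lambda_2$, and a one-line computation gives
\[ |A_0-A|^2-(H_0-H)^2=(\lambda_1^2+\lambda_2^2)-(\lambda_1+\lambda_2)^2=-2\lambda_1\lambda_2=-2\det(A_0-A). \]
When $A_0-A$ is positive or negative semi-definite we have $\lambda_1\lambda_2\ge0$, so this quantity is $\le0$; together with $R\le0$ the bracket in \eqref{formula-1} is $\le0$ pointwise, and since $\eta>0$ we conclude $\frac{d}{dt}\mathfrak{m}_{_{\rm BY}}(\Sigma_t)\le0$, i.e. $\mathfrak{m}_{_{\rm BY}}(\Sigma_t)$ is nonincreasing.

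For the characterization, suppose in addition $A=\alpha A_0$. If $\mathfrak{m}_{_{\rm BY}}(\Sigma_t)$ is constant then the integral in \eqref{formula-1} vanishes for every $t$; as its integrand is $\le0$ and $\eta>0$, the integrand must vanish identically, i.e. $R=2\det(A_0-A)$ on $\mathbb{S}^2\times I$. Now $A_0-A=(1-\alpha)A_0$, and by the Gauss equation \eqref{eq-gausse} in $\R^3$ one has $\det A_0=K>0$, so $R=2(1-\alpha)^2K\ge0$. Since $R\le0$ this forces $R\equiv0$ and, because $K>0$, $\alpha\equiv1$. Hence $A=A_0$ and $H=H_0$ everywhere.

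It remains to show that $A=A_0$ forces $(\mathbb{S}^2\times I,F^*g)$ to be a domain in $\R^3$, and conversely. Because the velocity is normal, $F^*g=\eta^2\,dt^2+\sigma(t)$, where $\sigma(t)$ is the induced metric on $\Sigma_t$, and \eqref{eq-c} with $A=A_0$ gives $\partial_t\sigma=2\eta A_0$. The idea is to realize this metric in $\R^3$: run the normal flow $\partial_t\hat r=\eta\hat\nu$ of convex surfaces in $\R^3$, started at $t=t_0$ from an isometric embedding of $(\mathbb{S}^2,\sigma(t_0))$, and let $\mu(t)$ be the induced metric of the flow. The flow surface is an embedding of $\mu(t)$ in $\R^3$, so its second fundamental form is the realizing one $A_0(\mu)$, and $\mu$ evolves by $\partial_t\mu=2\eta A_0(\mu)$. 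On the other hand $\sigma$ satisfies the same equation $\partial_t\sigma=2\eta A_0(\sigma)$ with the same initial datum at $t_0$, so uniqueness for this evolution gives $\mu(t)=\sigma(t)$. Consequently $\Phi(x,t)=\hat r(t)(x)$ satisfies $\Phi^*g_e=\eta^2\,dt^2+\sigma(t)=F^*g$, where $g_e$ is the Euclidean metric (the cross terms vanish since $\hat\nu\perp T\hat\Sigma_t$ and $|\partial_t\hat r|^2=\eta^2$), so $\Phi$ is a local isometry; as $\eta>0$ the convex surfaces expand and are nested, making $\Phi$ an embedding onto a domain. The converse is immediate: if $(\mathbb{S}^2\times I,F^*g)$ is a domain in $\R^3$ then each $\Sigma_t$ is a convex surface whose isometric embedding is, by the rigidity of ovaloids (Cohn--Vossen), congruent to itself, whence $H_0=H$ and $\mathfrak{m}_{_{\rm BY}}(\Sigma_t)\equiv0$, which is constant.

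The main obstacle is the realization step of the previous paragraph. One must produce the family $\hat r(t)$ depending differentiably on $t$, check that the normal flow preserves positivity of the Gaussian curvature, establish uniqueness for the evolution $\partial_t\mu=2\eta A_0(\mu)$ (whose right-hand side depends on $\mu$ through the fully nonlinear Weyl embedding problem, so this is where the smooth dependence of the embedding on the metric is essential), and finally argue injectivity of $\Phi$ so that the image is a genuine domain rather than merely an immersed flat region. The differentiability and smooth dependence are exactly what Proposition \ref{good-embedding} and the Appendix supply, and it is this realization, rather than a direct curvature computation on $F^*g$, that conveniently packages the Gauss, Codazzi, and normal-curvature relations forcing $(M,g)$ to be flat along the surfaces.
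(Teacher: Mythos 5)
Your first paragraph (the eigenvalue computation $|A_0-A|^2-(H_0-H)^2=-2\lambda_1\lambda_2\le 0$, hence monotonicity) is exactly the paper's argument, and your reduction of the equality case to $R\equiv 0$, $A\equiv A_0$ via $\det A_0=K>0$ is also correct; likewise the easy converse (a domain in $\R^3$ has $H_0=H$, hence vanishing, constant mass). The genuine gap is in your realization step. You need uniqueness of solutions to the evolution $\partial_t\mu=2\eta\,A_0(\mu)$, where $A_0(\mu)$ is obtained by solving the Weyl problem for $\mu$. This is not supplied by Proposition \ref{good-embedding} or the Appendix: those results produce, for a \emph{given} differentiable path of metrics, a differentiable path of isometric embeddings; they say nothing about two different metric paths satisfying the same evolution with the same initial datum. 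Worse, the operator $\mu\mapsto A_0(\mu)$ loses derivatives (Lemma \ref{lma-ap-3} controls the embedding in $C^{2,\alpha}$ by the metric in $C^{2,\alpha}$, so $A_0(\mu)$ is only $C^{0,\alpha}$), so this is not an ODE with Lipschitz right-hand side in a fixed Banach space, and Picard--Lindel\"of/Gronwall arguments do not apply. A repair would require something substantial that you do not provide, e.g.\ the infinitesimal rigidity theorem for ovaloids (to show that any differentiable family of embeddings of $\sigma(t)$ can be corrected by rigid motions so that its velocity is exactly $\eta$ times the normal), or a Nash--Moser type uniqueness argument. There is also a secondary circularity: to identify the second fundamental form of your flow surface with $A_0(\mu)$ you need the flow to stay convex, which you infer from $\mu=\sigma$ --- the very identity being proved (this is fixable by an open--closed argument, but it is not in your text).

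The paper proves this step by an entirely different, and much shorter, route: with $R=0$ and $H=H_0$ in hand, take $\Omega=\mathbb{S}^2\times(t_1,t_2)$ with the metric $F^*(g)$, glue to it the \emph{interior} $D$ of the isometrically embedded image of $\Sigma_{t_1}$ in $\R^3$ and the \emph{exterior} $E$ of the embedded image of $\Sigma_{t_2}$ (the induced metrics match by construction and the mean curvatures match because $H=H_0$). The result is an asymptotically flat, scalar-flat manifold with corners whose ADM mass is zero, so it is flat by the positive mass theorem with corners (\cite{Miao02}, \cite{ShiTam02}); hence $\Omega$ is flat, and being simply connected it isometrically embeds in $\R^3$. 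This packages exactly the Gauss--Codazzi--radial curvature information you were trying to extract from the flow construction, but with the analytic burden carried by an already-established rigidity theorem rather than by an unproved uniqueness statement for a derivative-losing evolution.
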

\begin{proof} Let $\lambda_1, \lambda_2$ be the eigenvalues of $A_0-A$. Suppose $A_0-A$ is either positive semi-definite or negative semi-definite, then $\lambda_1\lambda_2\ge 0$ and hence $|A_0-A|^2-|H_0-H|^2=-2\lambda_1\lambda_2\le 0$. Since $R\le 0$, by Theorem \ref{evolution-bymass}, we have:
$$
\frac{d}{dt}\mathfrak{m}_{_{\rm BY}}(\Sigma_t)\le0
$$
because $\eta>0$. This proves the first assertion.

Suppose $(\mathbb{S}^2\times I, F^*(g) )$ is a domain in $\R^3$, then by definition we have
$ \mathfrak{m}_{_{\rm BY}} (\Sigma_t) = 0 $, $ \forall t $. Hence,
$$
\frac{d}{dt}\mathfrak{m}_{_{\rm BY}}(\Sigma_t)=0.
$$
Conversely, suppose $ A = \alpha A_0 $ and
$$
\frac{d}{dt}\mathfrak{m}_{_{\rm BY}}(\Sigma_t)=0.
$$
Then $R=0$ and $A=A_0$. In particular, $H=H_0$. For any
$(t_1,t_2)\subset I$, let $\Omega= \mathbb{S}^2\times (t_1,t_2)$
with the pull back metric $F^*(g)$. Let $D$ be the interior of
$\Sigma_{t_{_1}} = F(\mathbb{S}^2 \times \{t_1\})$ when it is
isometrically embedded in $\R^3$ and $E$ be the exterior of
$\Sigma_{t_{_2}} = F(\mathbb{S}^2\times \{t_2\} )$ when it is
isometrically embedded in $\R^3$. By gluing $\Omega$ with $D$ along
$  \mathbb{S}^2 \times \{t_1\}$, which is identified with $
\Sigma_{t_{_1}} $ through $F$, and gluing $\Omega$ with $E$ along
$\mathbb{S}^2\times \{t_2\}$, which is identified with $
\Sigma_{t_{_2}} $ through $F$, we have an asymptotically flat and
scalar flat manifold with corners and with zero mass, and  it must
be flat by \cite{Miao02} \cite{ShiTam02}. Hence, $\Omega$ is flat.
Since it is simply connected, $\Omega$ can be isometrically embedded
in $\R^3$.

\end{proof}

\begin{cor} \label{form2-1}
Let $ (M, g)$, $ I $, $ F $,  $ \{ \Sigma_t \}$, $ \eta$,  $ A$, $ A_0 $, $ H $ and $ H_0$ be given as in Theorem \ref{evolution-bymass}.
Let $ g_e $ be the Euclidean metric on $ \R^3 $.
Suppose there exists another smooth map
$$
F^0 : \mathbb{S}^2 \times I \longrightarrow \R^3
$$
such that
\begin{enumerate}
\item[(i)] $ \Sigma^0_t = F^0(\mathbb{S}^2, t)$ is an embedded closed convex surface in $ \R^3 $ and
$$ (F^0_t)^* ( g_e ) = F_t^* ( g ) ,$$
where $ F^0_t ( \cdot ) = F^0 ( \cdot, t) $ and $ F_t ( \cdot ) = F ( \cdot , t)$.

\item[(ii)] The velocity vector $ \frac{\p F^0 }{\p t} $ is always perpendicular to $ \Sigma^0_t $, i.e
$$
\frac{ \p F^0}{\p t} = \eta^0 \nu^0 ,
$$
where $ \nu^0 $ is the outward unit normal to $ \Sigma^0_t $ in $ \R^3 $ and $ \eta^0 $
denotes the speed of $ \S^0_t $ with respect to $ \nu^0 $.
\end{enumerate}
Suppose $ \eta^0 > 0 $, $ \eta > 0 $ and $ (M, g)$ has zero scalar curvature, then
the Brown-York mass $ \mathfrak{m}_{_{\rm BY}} ( \S_t )$ is monotonically non-increasing, and
$ \mathfrak{m}_{_{\rm BY}} ( \S_t ) $ is a constant if and only if $  (\mathbb{S}^2 \times I, F^*(g) )$ is a domain in $\R^3$.
\end{cor}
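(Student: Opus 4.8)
The plan is to deduce this directly from Corollary \ref{form2} by using the auxiliary family $\{\Sigma^0_t\}$ to extract the structural relation $A = \alpha A_0$ with $\alpha>0$, after which all hypotheses of Corollary \ref{form2} are seen to hold. The first step is an identification of second fundamental forms. Since each $\Sigma_t$ has positive Gaussian curvature, its isometric embedding into $\R^3$ is unique up to a rigid motion (Cohn--Vossen rigidity for closed convex surfaces). Because $F^0_t$ realizes $(\mathbb{S}^2,\sigma(t))$ as the convex surface $\Sigma^0_t\subset\R^3$ with $(F^0_t)^*(g_e)=F_t^*(g)$, the second fundamental form $A_0$ occurring in Theorem \ref{evolution-bymass} coincides, after pulling back to $\mathbb{S}^2$, with the second fundamental form of $\Sigma^0_t$. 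In particular $A_0$ is positive definite at every point, since $\Sigma^0_t$ is convex.

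The key step is to compare the two normal flows through their effect on the induced metric. Writing $\sigma(t)=F_t^*(g)$ and $\sigma^0(t)=(F^0_t)^*(g_e)$ as metrics on the fixed manifold $\mathbb{S}^2$, hypothesis (i) gives $\sigma(t)=\sigma^0(t)$ for all $t$, hence their $t$-derivatives agree. Applying the first-variation identity \eqref{eq-c} to each flow, namely $\frac{\p F}{\p t}=\eta\nu$ on one side and $\frac{\p F^0}{\p t}=\eta^0\nu^0$ on the other, yields $2\eta A = 2\eta^0 A_0$ with all tensors pulled back to $\mathbb{S}^2$. Since $\eta>0$ and $\eta^0>0$, this gives
$$ A = \alpha A_0, \qquad \alpha = \frac{\eta^0}{\eta} > 0, $$
where $\alpha$ is a positive function on $\mathbb{S}^2\times I$; taking traces with respect to $\sigma(t)$ gives $H=\alpha H_0$.

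With this relation in hand the remaining hypotheses of Corollary \ref{form2} are immediate: $R=0\le 0$ by assumption, and $A_0-A=(1-\alpha)A_0$ is a scalar multiple of the positive-definite form $A_0$, so at each point it is either positive semi-definite or negative semi-definite. Corollary \ref{form2} then yields that $\mathfrak{m}_{_{\rm BY}}(\Sigma_t)$ is monotonically non-increasing and is constant if and only if $(\mathbb{S}^2\times I, F^*(g))$ is a domain in $\R^3$, which is exactly the assertion. One may also see the monotonicity directly: inserting $A=\alpha A_0$, $H=\alpha H_0$, and $R=0$ into \eqref{formula-1} and using $|A_0-A|^2-|H_0-H|^2=(1-\alpha)^2(|A_0|^2-H_0^2)=-2(1-\alpha)^2 K$ (with $K>0$ the Gaussian curvature) gives $\frac{d}{dt}\mathfrak{m}_{_{\rm BY}}(\Sigma_t)=-\frac{1}{8\pi}\int_{\Sigma_t}(1-\alpha)^2 K\eta\, d\sigma_t\le 0$, and equality for all $t$ forces $\alpha\equiv 1$.

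The main obstacle, and the only non-formal point, is the identification in the first paragraph: one must ensure that the second fundamental form $A_0$ defined intrinsically in Theorem \ref{evolution-bymass} through ``the'' isometric embedding agrees with the one supplied by the given family $F^0$. This is precisely where rigidity of closed convex surfaces is essential, for without uniqueness of the isometric embedding the comparison $2\eta A = 2\eta^0 A_0$ would relate $A$ to the second fundamental form of a possibly different embedding and the reduction would break down. Everything after this identification is a formal consequence of the derivative formula \eqref{formula-1} and Corollary \ref{form2}.
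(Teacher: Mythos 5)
Your proposal is correct and takes essentially the same route as the paper: the paper derives the same relation $A=\frac{\eta^0}{\eta}A_0$ (by writing $F^*(g)=\eta^2\,dt^2+g_t$ and $(F^0)^*(g_e)=(\eta^0)^2\,dt^2+g_t$ with a common $g_t$, which is the same computation as your use of \eqref{eq-c} applied to both flows) and then concludes via Corollary \ref{form2} using the positive definiteness of $A_0$. Your explicit appeal to Cohn--Vossen rigidity to identify the second fundamental form supplied by $F^0$ with the $A_0$ of Theorem \ref{evolution-bymass} is left implicit in the paper, but it is already built into the well-definedness of $A_0$ there, so this is a clarification rather than a difference of method.
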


\begin{proof}
Since  $ \eta^0 > 0 $ and $ \eta > 0 $, we can write $(F^0)^*(g_e)$  and $F^*(g)$ as
\be \label{gandgz}
F^* ( g_e ) = (\eta^0)^2 dt^2 + g_t
\ \
\mathrm{and}
\ \
F^* ( g )  = \eta^2 d t^2 + g_t,
\ee
where $ g_t  $ denotes the same induced metric on both $ \Sigma^0_t $ and $ \Sigma_t $.
Now it follows from \eqref{gandgz} that
\be \label{aandaz}
A = \frac{ \eta^0}{ \eta }  A_0 .
\ee
Since $A_0$ is positive definite, the results follow from Corollary \ref{form2}.
\end{proof}

\begin{rem}
We note that
\begin{enumerate}
  \item[(i)]  Quasi-spherical metrics constructed in \cite{ShiTam02} satisfy all the assumptions of Corollary \ref{form2-1}.
  \item[(ii)]   In case $\eta^0=1$, one recovers the monotonicity formula in \cite{ShiTam02}.
\end{enumerate}
\end{rem}

By applying the co-area formula directly to \eqref{formula-1}, we
also obtain

\begin{cor}
Let $ (M, g)$, $ F $, $ \{ \Sigma_t \}$, $ \eta$,  $ A$, $ A_0 $, $ H $ and $ H_0$ be given as in Theorem
\ref{evolution-bymass}. Suppose $ \eta > 0 $. For any $ t_1 < t_2 $, let
$ \Omega_{[t_1, t_2] } $ be the region bounded by $ \S_{t_1} $
and $ \S_{t_2}$.
Then
\be \label{formula-2}
\mathfrak{m}_{_{\rm BY}} ( \S_{t_2}  ) - \mathfrak{m}_{_{\rm BY}} ( \S_{t_1} ) =
\frac{1}{16 \pi}
\left(
\int_{ \Omega_{ [ t_1, t_2 ] } }  R \ d V
+ \int_{ \Omega_{ [ t_1, t_2 ] } } \Phi \ d V \right) ,
\ee
where $ R $ is the scalar curvature of $ (M, g) $, $ d V $ is the volume form of $ g $ on $ M $, and
 $ \Phi $ is the function on $ \Omega_{ [ t_1, t_2 ] } $,
depending on  $ \{ \Sigma_t \}$,  defined by
\be
\Phi ( x ) = |  A_0 - A |^2 - ( H_0 - H )^2, \
 x \in \S_t .
\ee
\end{cor}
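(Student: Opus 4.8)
The plan is to integrate the pointwise derivative formula \eqref{formula-1} of Theorem \ref{evolution-bymass} over $[t_1, t_2]$ and then collapse the resulting iterated integral into a single volume integral over $\Omega_{[t_1,t_2]}$ by means of the co-area formula. Note that the function $\Phi$ has been defined precisely so that the integrand of \eqref{formula-1} reads $(\Phi + R)\eta$; thus the identity to be proved is really just \eqref{formula-1} written in integrated form.

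First I would record that, by Theorem \ref{evolution-bymass}, $\mathfrak{m}_{_{\rm BY}}(\S_t)$ is a differentiable function of $t$ with
\be
\frac{d}{dt} \mathfrak{m}_{_{\rm BY}}(\S_t) = \frac{1}{16\pi} \int_{\S_t} \lf( \Phi + R \ri) \eta \ d\sigma_t .
\ee
Integrating from $t_1$ to $t_2$ and applying the fundamental theorem of calculus, I obtain
\be
\mathfrak{m}_{_{\rm BY}}(\S_{t_2}) - \mathfrak{m}_{_{\rm BY}}(\S_{t_1}) = \frac{1}{16\pi} \int_{t_1}^{t_2} \lf( \int_{\S_t} ( \Phi + R ) \eta \ d\sigma_t \ri) dt .
\ee

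The second and essential step is the co-area identity. Since $\eta > 0$ and the velocity $\frac{\p F}{\p t} = \eta \nu$ is everywhere normal to $\S_t$, the restriction of $F$ to $\mathbb{S}^2 \times [t_1, t_2]$ is a diffeomorphism onto $\Omega_{[t_1,t_2]}$ and the pullback metric takes the Gaussian product form $F^*(g) = \eta^2 dt^2 + g_t$, exactly as in \eqref{gandgz}. Its volume form is therefore $dV = \eta \ d\sigma_t \ dt$. Substituting this into the iterated integral above and applying Fubini's theorem converts it into $\int_{\Omega_{[t_1,t_2]}} (\Phi + R) \ dV$; splitting this into its two summands yields \eqref{formula-2}.

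The computation is otherwise routine, so the only point deserving real care is the justification of $dV = \eta \ d\sigma_t \ dt$: one must check that $\eta > 0$ makes $\{\S_t\}$ a genuine nondegenerate foliation of $\Omega_{[t_1,t_2]}$ that covers the region exactly once, which is what legitimizes the change of variables. This is the same normal-speed product structure already exploited in Corollary \ref{form2-1}, so I do not expect any serious obstacle to arise here.
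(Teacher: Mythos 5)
Your proposal is correct and is exactly the paper's argument: the paper proves this corollary in one line, "by applying the co-area formula directly to \eqref{formula-1}," which is precisely your combination of the fundamental theorem of calculus with the identity $dV = \eta \, d\sigma_t \, dt$ coming from the product structure $F^*(g) = \eta^2\, dt^2 + g_t$. Your added remark about $\eta>0$ making the change of variables legitimate is the right (and only) point of care, and it matches the implicit assumption in the paper's statement that $\Omega_{[t_1,t_2]}$ is the region swept out by the foliation.
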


The function $ \Phi (x )$ defined above clearly depends on the foliation $ \{ \Sigma_t \} $
connecting $ \Sigma_{t_1} $ to $ \Sigma_{t_2} $. However, it is interesting to
note that the integral
$ \int_{ \Omega_{[ t_1, t_2] } } \Phi \ d V $
turns out to be $\{ \Sigma_t \}$ independent by \eqref{formula-2}.

We can apply formula \eqref{formula-2}  to small geodesic balls in a general $3$-manifold and to
asymptotically flat regions in an asymptotically flat $3$-manifold.

\begin{cor}
Let $ (M, g) $ be a $ 3$-dimensional Riemannian manifold. Let $ p \in M $ and
$ B_\delta (p) $ be a geodesic ball centered at $ p $ with geodesic radius $ \delta $.
Suppose $ \delta $ is small enough such that
\begin{enumerate}
\item $ \delta < i_p (M ) $, where $ i_p (M) $ is the injectivity radius of $(M, g) $ at $ p $.
\item For any $ 0 < r \leq \delta $, the geodesic sphere $ S_r(p)$, centered at $ p $ with
geodesic radius $ r $, has positive Gaussian curvature.
\end{enumerate}
Then the Brow-York mass of $ S_\delta (p)$ can be written as
\be  \label{formula-3}
\mathfrak{m}_{_{\rm BY}} ( S_\delta (p)  )  =
\frac{1}{16 \pi}
\left(
\int_{ B_\delta(p) }  R \ d V
+ \int_{ B_\delta (p) \setminus \{ p \}  } \Phi \ d V \right) ,
\ee
where  $ R $ is the scalar curvature of $ M $, $ d V $ is the volume form on $ M $, and
 $ \Phi $ is the function on $ B_\delta (p) \setminus \{ p \} $, defined by
\be
\Phi ( x ) = |  A_0 - A |^2 - ( H_0 - H )^2, \
 x \in S_r .
\ee
Here $ A$, $ H$ are the second fundamental form, the mean curvature of $ S_r $ in $ M $ with respect to
the outward normal; and $ A_0 $, $ H_0 $ are the second fundamental form, the mean curvature of
the isometric embedding of $ S_r $ in $ \R^3 $ with respect to the outward normal.
\end{cor}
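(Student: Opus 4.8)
The plan is to realize the punctured ball $B_\delta(p)\setminus\{p\}$ as a family of evolving surfaces — the geodesic spheres $S_r(p)$ — and to apply the integrated derivative formula \eqref{formula-2} on an annular region, letting the inner radius tend to zero.

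First I would set up the foliation. Since $\delta < i_p(M)$, for each $r\in(0,\delta]$ the geodesic sphere $S_r(p)$ is a smooth embedded $2$-sphere, and identifying $S_r(p)$ with $\mathbb{S}^2$ via radial geodesics gives a smooth family $F(\cdot,r)$ whose velocity is the outward unit normal, so that $\eta\equiv 1$. Together with hypothesis (2) (positive Gaussian curvature), the family satisfies the assumptions of Theorem \ref{evolution-bymass} on every $[r,\delta]\subset(0,\delta]$. Applying \eqref{formula-2} with $t_1=r$, $t_2=\delta$, and observing that the region between $S_r(p)$ and $S_\delta(p)$ is the annulus $B_\delta(p)\setminus\overline{B_r(p)}$, I would obtain, for all $0<r<\delta$,
\be
\mathfrak{m}_{_{\rm BY}}(S_\delta(p))-\mathfrak{m}_{_{\rm BY}}(S_r(p))=\frac{1}{16\pi}\lf(\int_{B_\delta(p)\setminus\overline{B_r(p)}}R\,dV+\int_{B_\delta(p)\setminus\overline{B_r(p)}}\Phi\,dV\ri).
\ee
The whole statement then reduces to passing to the limit $r\to0^+$.

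The heart of the argument is the asymptotic analysis as $r\to0^+$. The term with $R$ is harmless: $R$ is bounded on $\overline{B_\delta(p)}$ and $\overline{B_r(p)}$ collapses to a point of measure zero, so $\int_{B_\delta\setminus\overline{B_r}}R\,dV\to\int_{B_\delta}R\,dV$ by dominated convergence. For the other two terms I would use the expansion of the metric in geodesic normal coordinates, $\sigma_r=r^2g_{\mathbb{S}^2}+O(r^4)$, which gives the shape operator of $S_r(p)$ in $M$ as $S=\tfrac1r I+O(r)$, hence $H=\tfrac2r+O(r)$. The data $H_0$, $A_0$ for the isometric embedding in $\R^3$ are not local, but since the rescaled metrics $r^{-2}\sigma_r$ converge to the round metric in $C^2$, the stability of the Weyl embedding — the same continuity used in Proposition \ref{good-embedding} and Lemma \ref{lma-ap-3} — yields $S_0=\tfrac1r I+O(r)$ as well. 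Two conclusions follow: first, $H_0-H=O(r)$, so $\mathfrak{m}_{_{\rm BY}}(S_r(p))=\tfrac1{8\pi}\int_{S_r}(H_0-H)\,d\sigma=O(r^3)\to0$; second, the singular $\tfrac1r$ parts of $S_0$ and $S$ cancel, so $\Phi=\tr\big((S_0-S)^2\big)-\big(\tr(S_0-S)\big)^2=O(r^2)$ stays bounded near $p$, which together with $dV\sim r^2\,dr\,d\omega$ makes $\Phi$ integrable on $B_\delta(p)\setminus\{p\}$ and gives $\int_{B_\delta\setminus\overline{B_r}}\Phi\,dV\to\int_{B_\delta\setminus\{p\}}\Phi\,dV$. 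Substituting these limits into the displayed identity produces \eqref{formula-3}.

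I expect the main obstacle to be the control of the embedding data $A_0$, $H_0$ near the center: unlike $A$ and $H$, they are determined by solving the isometric embedding problem rather than by a local curvature computation, so the estimate $S_0=\tfrac1r I+O(r)$ — and in particular the cancellation of the $\tfrac1r$ singularity in $S_0-S$ — must be extracted from the regularity and stability of the Weyl embedding under $C^2$-perturbations of the round sphere. This is precisely the ingredient supplied by the Nirenberg-type results used earlier, now applied to the normalized family $r^{-2}\sigma_r$.
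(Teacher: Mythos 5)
Your proposal is correct, and its skeleton coincides with the paper's proof: foliate the punctured ball by the geodesic spheres $S_r(p)$ (with $\eta \equiv 1$, by the Gauss lemma), apply the integrated formula \eqref{formula-2} on the annulus between $S_r(p)$ and $S_\delta(p)$, and let $r \to 0^+$. Where you genuinely diverge is in the treatment of the limit. The paper simply cites the small-sphere limit theorem of Fan--Shi--Tam \cite{FanShiTam07} to get $\lim_{r\to 0^+}\mathfrak{m}_{_{\rm BY}}(S_r(p)) = 0$ and stops there; you instead prove this from scratch by rescaling, $r^{-2}\sigma_r \to g_{\mathbb{S}^2}$ in $C^{2,\alpha}$ with rate $O(r^2)$, and invoking the Nirenberg-type stability of the Weyl embedding (Lemma \ref{lma-ap-3}, already in the paper's appendix) to get $S_0 = \tfrac1r I + O(r)$, hence $H_0 - H = O(r)$ and $\mathfrak{m}_{_{\rm BY}}(S_r(p)) = O(r^3)$. (One small point worth making explicit: the stability lemma produces \emph{some} nearby isometric embedding, and it is the rigidity of convex isometric embeddings --- uniqueness up to rigid motion --- that lets you transfer the estimate to the data $A_0$, $H_0$ appearing in the Brown--York mass; this is standard and implicit throughout the paper.) Your route buys two things: it is self-contained relative to the paper's own appendix, and, more substantively, it shows $\Phi = O(r^2)$ near $p$, so that $\Phi$ is bounded and absolutely integrable on $B_\delta(p)\setminus\{p\}$ and the $\Phi$-integral converges by dominated convergence --- a point the paper leaves implicit, since in its proof the integral $\int_{B_\delta(p)\setminus\{p\}}\Phi\, dV$ is a priori only an improper integral defined by the limit in \eqref{formula-2}. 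What the paper's citation buys in exchange is brevity and sharper information (the Fan--Shi--Tam result identifies the actual $O(r^3)$ asymptotics of the small-sphere Brown--York mass, not merely that it vanishes).
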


\begin{proof}
Let $ (r, \omega)$ be the geodesic polar coordinate of $ x \in B_\delta(p) \setminus \{ p \} $,
where $ r $ denotes the  distance from $ x $ to $ p $.
Since $ \frac{ \p }{\p r} \perp S_r$, we can choose the foliation $ \{ \S_t \} $ in Corollary \ref{form2}
to be $ \{ S_r \}$ with $ t = r $. By \eqref{formula-2}, we have
\be \label{eq-j}
\mathfrak{m}_{_{\rm BY}} ( S_\delta (p) )   - \mathfrak{m}_{_{\rm BY}} ( S_r (p) ) =
\frac{1}{16 \pi}
\int_{ B_\delta(p) \setminus B_r (p) }  ( R + \Phi ) \ d V .
\ee
By \cite{FanShiTam07}, we have
\be \label{eq-k}
\lim_{r \rightarrow 0+} \mathfrak{m}_{_{\rm BY}} ( S_r (p) ) = 0 .
\ee
Hence, \eqref{formula-3} follows from \eqref{eq-j} and \eqref{eq-k}.
\end{proof}


Next, we express the ADM mass \cite{ADM61}
as the sum of the Brown-York mass of a coordinate sphere and an integral involving the
scalar curvature and the function $ \Phi (x) $.

\begin{cor} \label{ADMmassformula}
Let $ (M, g)$ be an asymptotically flat $ 3$-manifold
 with a given end. Let $ \{ x^i \ | \ i = 1,2,3 \}$ be a coordinate system
 at $ \infty $ defining the
 asymptotic structure of $ (M, g )$.
 Let  $ S_r = \{ x \in M \ | \ | x | = r \}  $
 be the coordinate  sphere, where $ | x | $
 denotes the coordinate  length.
 Suppose $ r_0 \gg 1 $ is a constant such that $ S_r $ has positive
 Gaussian curvature for each $ r \geq r_0$.
 Then
  \be \label{formula-adm}
 \mathfrak{m}_{_{\rm ADM}} = \mathfrak{m}_{_{\rm BY}} ( S_{r_0} ) + \frac{1}{16 \pi}
\int_{ M \setminus D_{r_0} }  R \ d V + \frac{1}{16 \pi} \int_{ M
\setminus D_{r_0} }  \Phi \ d V ,
 \ee
 where $ \mathfrak{m}_{_{\rm ADM}} $ is the ADM mass of $ (M, g)$,
 $ R $ is the scalar curvature of $ (M, g)$,
 $ D_{r_0} $ is the bounded open set in $ M $ enclosed by $ S_{r_0}$,
  and
 $ \Phi $ is the function on $ M \setminus D_{r_0} $ defined by
\be \Phi ( x ) = |  A_0 - A |^2 - ( H_0 - H )^2, \
 x \in S_r .
\ee
Here $ A$, $ H$ are the second fundamental form, the mean curvature of $ S_r $ in $ M $;
and $ A_0 $, $ H_0 $ are the second fundamental form, the mean curvature of
$ S_r $ when isometrically embedded in $ \R^3$.
\end{cor}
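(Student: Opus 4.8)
The plan is to deduce \eqref{formula-adm} from the integrated derivative formula \eqref{formula-2} by applying it to a foliation of the asymptotic region $ M \setminus D_{r_0} $ by coordinate spheres and then letting the outer sphere tend to infinity. Concretely, I would establish \eqref{formula-2} on each shell $ \Omega_{[r_0, \rho]} $ bounded by $ S_{r_0} $ and $ S_\rho $, and then pass to the limit $ \rho \to \infty $, invoking the well-known large-sphere limit of the Brown-York mass.

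The first technical point is that \eqref{formula-2}, and hence Theorem \ref{evolution-bymass}, requires the leaves to be swept out by a velocity field that is \emph{normal} to each leaf, whereas the coordinate radial direction $ \partial/\partial|x| $ is in general not $ g $-orthogonal to the coordinate spheres. To remedy this I would reparametrize the foliation. Let $ u = |x| $, whose level sets are precisely the coordinate spheres $ S_r $, and flow along the vector field $ \nabla u / |\nabla u|_g^2 $ starting from $ S_{r_0} $. Since $ g $ is asymptotically flat, $ \nabla u $ is a small perturbation of the Euclidean radial field in the asymptotic region, so $ |\nabla u|_g = 1 + o(1) \neq 0 $ for $ r_0 \gg 1 $ and the flow is well defined. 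Because $ \frac{d}{ds} u = \langle \nabla u, \nabla u/|\nabla u|_g^2 \rangle = 1 $ along trajectories, the leaf reached at parameter $ r $ is exactly $ S_r $, so the resulting map $ F : \mathbb{S}^2 \times [r_0, \infty) \to M $ has leaves $ \Sigma_r = S_r $, velocity $ \partial F/\partial r = \nabla u/|\nabla u|_g^2 $ orthogonal to $ S_r $, and speed $ \eta = |\nabla u|_g^{-1} > 0 $. The positivity of the Gaussian curvature of each $ S_r $, $ r \geq r_0 $, is assumed, so all the hypotheses of Theorem \ref{evolution-bymass} hold and \eqref{formula-2} applies to this foliation.

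With the foliation in hand, \eqref{formula-2} gives, for every $ \rho > r_0 $,
\be
\mathfrak{m}_{_{\rm BY}} ( S_\rho ) - \mathfrak{m}_{_{\rm BY}} ( S_{r_0} ) = \frac{1}{16\pi} \left( \int_{ \Omega_{[r_0, \rho]} } R \, dV + \int_{ \Omega_{[r_0, \rho]} } \Phi \, dV \right).
\ee
I would then let $ \rho \to \infty $, noting that $ \Omega_{[r_0, \rho]} $ exhausts $ M \setminus D_{r_0} $. The left-hand side converges to $ \mathfrak{m}_{_{\rm ADM}} - \mathfrak{m}_{_{\rm BY}} ( S_{r_0} ) $ by the large-sphere limit of the Brown-York mass. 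On the right, $ \int_{ M \setminus D_{r_0} } R \, dV $ converges because the scalar curvature is integrable on an asymptotically flat manifold whenever the ADM mass is defined. Since both the left-hand side and the $ R $-integral have limits, the $ \Phi $-integral $ \int_{ M \setminus D_{r_0} } \Phi \, dV = \lim_{\rho \to \infty} \int_{ \Omega_{[r_0, \rho]} } \Phi \, dV $ exists as well, and \eqref{formula-adm} follows immediately.

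The main obstacle is the identity $ \lim_{\rho \to \infty} \mathfrak{m}_{_{\rm BY}} ( S_\rho ) = \mathfrak{m}_{_{\rm ADM}} $: one must control, to leading order in $ 1/\rho $, both the mean curvature $ H $ of $ S_\rho $ in $ (M, g) $ and the mean curvature $ H_0 $ of its isometric embedding into $ \R^3 $, and verify that $ \frac{1}{8\pi}\int_{S_\rho}(H_0 - H)\, d\sigma $ recovers the ADM expression. This is a standard but delicate asymptotic computation---precisely the property that makes the Brown-York mass a good quasi-local mass---and I would invoke it from the literature rather than rederive it here.
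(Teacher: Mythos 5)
Your proposal is correct and follows essentially the same route as the paper: the paper also reparametrizes the coordinate-sphere foliation by flowing along $\nabla r/|\nabla r|^2$ (identical to your $\nabla u/|\nabla u|_g^2$) so that the velocity is normal to each leaf, applies \eqref{formula-2} on the shells, and lets the outer radius tend to infinity, citing the large-sphere limit $\lim_{\rho\to\infty}\mathfrak{m}_{_{\rm BY}}(S_\rho)=\mathfrak{m}_{_{\rm ADM}}$ from Fan--Shi--Tam \cite{FanShiTam07}. Your additional remarks on the non-vanishing of $|\nabla u|_g$ and the convergence of the $R$- and $\Phi$-integrals are sensible elaborations of steps the paper leaves implicit.
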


\begin{proof}
$\{ S_r \}_{r \geq r_0} $ consists of level sets of the function $ r
$ on $ M \setminus D_{r_0}$, hence can be {\em reparameterized} to
evolve in a way  that its velocity vector is perpendicular to the
surface at each time. To be precise, we can
 define the vector field $ X = \frac{ \nabla r }{ | \nabla r|^2 } $ on $ M \setminus D_{r_0}$
 and let $ \gamma_p (t) $ be the integral curve of $ X $  starting at $ p \in S_{r_0}$.
 For any $ t \geq 0 $, let
 $ \Sigma_t  = \{  \gamma_p (t) \ | \ p \in S_{r_0} \}$, then $ \Sigma_t = S_{r_0 +t} $.
For any $ T > 0 $, apply \eqref{formula-2} to $\{ \Sigma_t \}_{ 0
\leq t \leq T }$, we have \be  \label{eq-l} \mathfrak{m}_{_{\rm BY}}
( S_{r_0 +T}  ) - \mathfrak{m}_{_{\rm BY}} ( S_{ r_0} ) =
\frac{1}{16 \pi} \left( \int_{ \Omega_{[0,T]} }  R \ d V + \int_{
\Omega_{[0,T] } } \Phi \ d V \right) , \ee where $ \Omega_{[0,T]} $
is the region in $ M $  bounded by $ \S_0 = S_{r_0}$ and $ \S_T =
S_{r_0 +T}$. Letting $ T \rightarrow + \infty$, by
\cite{FanShiTam07} we have \be \label{eq-m} \lim_{T \rightarrow +
\infty} \mathfrak{m}_{_{\rm BY}} ( S_{T} ) = \mathfrak{m}_{_{\rm
ADM}} . \ee Hence, \eqref{formula-adm} follows  from \eqref{eq-l}
and \eqref{eq-m}.
\end{proof}

\section{Liu-Yau mass of spacelike two-surfaces in $\mathbb{R}^{3,1}$}

Let $\Sigma$ be a closed, connected, $2$-dimensional spacelike surface in a spacetime
$N$.  Suppose $ \Sigma $ has positive Gaussian curvature and has spacelike mean curvature
vector $ \vec H $ in $ N$.
Let $ H_0 $ be the mean curvature of $ \Sigma $ with respect to the outward unit normal
when it is isometrically embedded in $ \R^3 $. The Liu-Yau mass
of $ \Sigma $ is then defined as (see \cite{LY1, LY2}):
$$ \mathfrak{m}_{_{\rm LY}}(\Sigma)=\frac{1}{8\pi }\int_\Sigma (H_0 -|\vec H|) \ d\sigma, $$
where  $|\vec H|$ is Lorentzian norm of $\vec H$ in $ N $ and $ d \sigma $ is the volume
form of the induced metric on $ \Sigma $.

In \cite{LY2}, the following positivity result was proved: {\it Let $\Omega$ be a compact,
spacelike hypersurface in a spacetime $N$ satisfying the dominant energy conditions.
Suppose the boundary $\p \Omega$ has finitely many components $\Sigma_i$, $1 \le i\le l$,
each of which has positive Gaussian curvature and has spacelike mean curvature vector in $ N$.
Then $\mathfrak{m}_{_{\rm LY}}(\Sigma_i)\ge0$ for all $i$; moreover if $\mathfrak{m}_{_{\rm LY}}(\Sigma_i)=0$
for some $i$, then $\p\Omega$ is connected and $N$ is a flat spacetime along $ \Omega $}.

We remark that in their proof of the above result, it is assumed implicitly that the mean curvature
of $\p\Omega$ in $\Omega$ with respect to the outward unit normal is positive. See the statement \cite[Theorem 1.1]{WangYau07}.
The condition is necessary as  can be seen by the following example
in the time symmetric case:

Let $ g_e $ be the Euclidean metric on $ \R^3 $ and let $ m > 0 $ be a constant.
Consider the Schwarzschild metric (with negative mass)
$$
g = \lf(1 -\frac m{2 | x | } \ri)^4 g_e,
$$
defined on $\{ 0< | x | < \frac m 2 \} $. Given any $ 0 < r_1 < r_2 < \frac{m}{2} $,
consider the domain
$$ \Omega = \lf\{r_1 < | x |  <r_2 \ri\} .$$
For any constant $ r $, the mean curvature $H$ of the sphere
$ S_r = \{ | x | = r \}$ with respect to the unit normal in the direction of $\p/\p r$ is
$$
H=\frac1{(1 -\frac m{2r})^2}\lf(\frac 2r+\frac{4}{1 -\frac m{2r}}\frac{m}{2r^2}\ri).
$$
The mean curvature of $ S_r $ when it is embedded in $\R^3$ is
$$
H_0=\frac1{(1 -\frac m{2r})^2}\frac 2r.
$$
Suppose $r<\frac m2$, then $H<0$ and
$$
|H|-H_0=-\frac4{r(1 -\frac m{2r})^3}>0.
$$
Hence, $\mathfrak{m}_{_{\rm LY}}(S_{r_{_1}} )<0$ and $\mathfrak{m}_{_{\rm LY}}(S_{r_{_2}})<0$ where
$ \p \Omega =  S_{{ r_{_1}}} \cup S_{{r_{_2}}} $.

In \cite{MST}, \'{O}  Murchadha,   Szabados and Tod gave some examples of
a spacelike $2$-surface, lying on the light cone of the Minkowski space
$\mathbb{R}^{3,1}$, whose Liu-Yau mass is strictly positive.
Motivated by their result, we want to understand the Liu-Yau mass of more general
spacelike $2$-surfaces in $ \R^{3,1}$. In the sequel, we always regard
$\mathbb{R}^3$ as the $t=0$ slice in $\mathbb{R}^{3,1}$.  We have the following:

\begin{thm}\label{LiuYau-t1} Let $\Sigma$ be a closed, connected, smooth, spacelike
$2$-surface in $\R^{3,1}$. Suppose $ \Sigma $ spans a compact spacelike hypersurface
in $ \R^{3,1}$.
If $ \Sigma $ has positive Gaussian curvature and has spacelike mean curvature vector, then
$\mathfrak{m}_{_{\rm LY}}(\Sigma)\geq 0$; moreover $ \mathfrak{m}_{_{\rm LY}} (\Sigma ) = 0 $
if and only if $\Sigma$ lies on a hyperplane in $\mathbb{R}^{3,1}$.
\end{thm}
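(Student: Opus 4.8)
The plan is to separate the inequality $\mathfrak{m}_{_{\rm LY}}(\Sigma)\ge 0$ from the equality (rigidity) statement, and to treat the rigidity as the substantial part. For the inequality I would reduce to the Liu--Yau positivity theorem quoted above. First I would record the relevant normal-bundle computation. Let $\Omega$ be the compact spacelike hypersurface with $\partial\Omega=\Sigma$, let $\mathbf n$ be its future-pointing timelike unit normal in $\R^{3,1}$, and let $\mathbf e$ be the outward unit normal of $\Sigma$ inside $\Omega$, so that $\{\mathbf e,\mathbf n\}$ frames the normal bundle of $\Sigma$. Writing the mean curvature vector as $\vec H = H_\Omega\,\mathbf e - p\,\mathbf n$, where $H_\Omega$ is the mean curvature of $\Sigma$ in $\Omega$ with respect to $\mathbf e$ and $p=\tr_\Sigma k$ is the trace over $\Sigma$ of the second fundamental form $k$ of $\Omega$, the Lorentzian norm is $|\vec H|^2 = H_\Omega^2 - p^2$. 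Hence the hypothesis that $\vec H$ is spacelike reads $H_\Omega^2>p^2\ge 0$; in particular $H_\Omega\neq 0$ and $|\vec H|\le |H_\Omega|$.

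For positivity I would then check the hypotheses of the quoted Liu--Yau theorem. Since $\R^{3,1}$ is flat, the dominant energy condition holds trivially along any spacelike $\Omega$; by assumption $\Sigma$ has positive Gaussian curvature and spacelike $\vec H$. The only point requiring care is the outward mean curvature condition $H_\Omega>0$ (the implicit hypothesis noted above): here I would argue that $H_\Omega\neq 0$ together with connectedness of $\Sigma$ forces $H_\Omega$ to have constant sign, and fix the orientation so that this sign is positive. With this, the quoted theorem gives $\mathfrak{m}_{_{\rm LY}}(\Sigma)\ge 0$. I would stress at this stage that the rigidity clause of that theorem is of no use here: it only concludes that $N=\R^{3,1}$ is flat along $\Omega$, which is automatic, so the characterization of the equality case must be established by a separate argument.

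The easy direction of the rigidity is direct: if $\Sigma$ lies in a spacelike hyperplane $P\cong\R^3$, then the inclusion $\Sigma\hookrightarrow P$ \emph{is} the isometric embedding of $(\Sigma,\gamma)$ into $\R^3$, so $\vec H$ is spacelike with $|\vec H|=H_0$ and $\mathfrak{m}_{_{\rm LY}}(\Sigma)=0$. For the converse I would run the equality case of the construction underlying the Liu--Yau bound: isometrically embed $(\Sigma,\gamma)$ as a convex surface $\hat\Sigma\subset\R^3$ (Weyl/Nirenberg), reduce the spacelike problem to the time-symmetric Shi--Tam setting (via Jang's equation), fill the exterior of $\hat\Sigma$ by a Shi--Tam quasi-spherical extension matching the boundary metric, and use that $\mathfrak{m}_{_{\rm LY}}(\Sigma)$ dominates the ADM mass of this scalar-flat asymptotically flat extension. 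Then $\mathfrak{m}_{_{\rm LY}}(\Sigma)=0$ forces that ADM mass to vanish, so by the rigidity in the positive mass theorem the extension is flat $\R^3$, which in turn forces the boundary data to coincide with those of the planar configuration, namely $p\equiv 0$ and $H_\Omega\equiv H_0$.

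The hard part will be promoting this matching of extrinsic data to the conclusion that $\Sigma$ is planar. Having $p\equiv 0$ says the time component of the second fundamental form of $\Sigma$ in $\R^{3,1}$ is trace-free, and combined with $|\vec H|=H_0=H_\Omega$ it should pin the full Lorentzian second fundamental form of $\Sigma$ down to that of the convex surface $\hat\Sigma\subset\R^3$; the rigidity of isometric embeddings of convex surfaces (Cohn--Vossen) then identifies $\Sigma$, up to a rigid motion of $\R^{3,1}$, with $\hat\Sigma$ sitting in a totally geodesic spacelike hyperplane. A clean geometric way to phrase the final step is via the orthogonal projection $\pi:\R^{3,1}\to\R^3$ onto $\{t=0\}$: because $\Omega$ is spacelike, $\pi$ is metric-nonincreasing on $\Sigma$, so the projected Euclidean metric $\hat\gamma$ dominates $\gamma$, with equality exactly when the height (time) function is constant on $\Sigma$, i.e. exactly when $\Sigma$ is planar. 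The genuine obstacle, which I expect to be the most delicate point, is the equality discussion in the positive mass / monotonicity step and its transfer back to the \emph{full} Lorentzian second fundamental form, since one must rule out any nontrivial ``tilting in time'' that is still compatible with $p\equiv 0$ and $|\vec H|=H_0$.
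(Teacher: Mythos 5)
There are two genuine gaps in your proposal, one in each half of the argument.

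\textbf{The sign of the boundary mean curvature.} You correctly observe that a spacelike $\vec H$ forces $H_\Omega\neq 0$, hence constant sign on the connected $\Sigma$, but you then propose to ``fix the orientation so that this sign is positive.'' There is no orientation freedom to exploit: the normal in question is the \emph{outward} unit normal of $\Sigma$ in $\Omega$, which is determined by $\Omega$, and the positivity theorem you want to quote (as well as the Shi--Tam inequality) requires the mean curvature with respect to precisely that normal to be positive. The case $H_\Omega<0$ everywhere is a genuine geometric possibility that must be ruled out by an argument; indeed the paper's own negative-mass Schwarzschild example (which is time-symmetric, so $\vec H$ is spacelike there) shows that in a general ambient space the outward mean curvature can be negative everywhere and the Liu--Yau mass then comes out negative. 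In the Minkowski setting the paper excludes this with Lemma \ref{bdrylemma}: a supporting-hyperplane argument at a point of the hypersurface extremizing a spatial coordinate $x_1$, using that the touching hyperplane $\{x_1=0\}$ is timelike while the hypersurface is spacelike, which shows $k$ must be nonnegative somewhere, hence positive everywhere. Your proposal has no substitute for this step.

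\textbf{The rigidity.} The paper's central device, which your plan never introduces, is to replace the given spanning hypersurface by a compact \emph{maximal} spacelike hypersurface spanned by $\Sigma$ (Lemma \ref{LiuYau-l1}, proved via Bartnik--Simon). Maximality gives $R=|p|^2\geq 0$ for the induced metric by the constraint (Gauss) equation, so Shi--Tam applies directly to $(M,g)$ --- no Jang equation is needed --- yielding $\int_\Sigma(k_0-|\vec H|)\,d\sigma\geq\int_\Sigma(k_0-k)\,d\sigma\geq 0$; and in the equality case flatness of $(M,g)$ forces $|p|^2=R=0$, i.e.\ $p\equiv 0$ on \emph{all} of $M$, so the immersion is totally geodesic and $F(M)$, hence $\Sigma$, lies in a hyperplane. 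Your alternative route --- the equality analysis of the Jang-equation reduction, followed by an attempt to conclude planarity from $p\equiv 0$ and $H_\Omega=H_0$ on the boundary --- does not close. As you yourself concede, $p\equiv 0$ is only the $\Sigma$-trace of the second fundamental form of $\Omega$, and this boundary data, even combined with $|\vec H|=H_0$, does not control the time-tilting of $\Sigma$; the Cohn--Vossen and projection remarks do not supply the missing step. The obstacle you flag as ``the most delicate point'' is exactly what the maximal-hypersurface replacement eliminates, because it converts the equality statement into a statement about the full second fundamental form over the entire filling, not just trace data on its boundary.
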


In order to prove this theorem,  we need the following result which can be proved by the method of
Bartnik and Simon \cite{BartnikSimon82} and by an idea  from Bartnik \cite{Bartnik09}. In fact, it is just a special case of the results by Bartnik \cite{Bartnik88}.

\begin{lma}\label{LiuYau-l1}
Let $\Sigma$ be a closed, connected, smooth, spacelike $2$-surface in $\R^{3,1}$.
Suppose $ \Sigma $ spans a compact spacelike hypersurface  in $\R^{3,1}$. Then $\Sigma$ spans
a compact, smoothly immersed, maximal spacelike hypersurface in $\R^{3,1}$.
\end{lma}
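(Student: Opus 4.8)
The plan is to produce the desired hypersurface as the solution of a Plateau-type problem for maximal spacelike hypersurfaces, exploiting the fact that in Lorentzian signature a spacelike hypersurface has vanishing mean curvature exactly when it is a critical point of the induced area functional among nearby spacelike competitors; because the integrand is concave, such critical points are in fact local maximizers of area, which is the origin of the word ``maximal.'' The hypothesis that $\Sigma$ spans a compact spacelike hypersurface $\Omega_0$ will be used twice: it guarantees that the class of spacelike competitors with boundary $\Sigma$ is nonempty, and it provides a spacelike comparison surface that we will later deploy as a barrier.

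First I would record the local description and set up the variational problem. Writing a point of $\R^{3,1}$ as $(x,t)$ with $x\in\R^3$, $t\in\R$ and Lorentzian metric $|dx|^2-dt^2$, a spacelike hypersurface projects by $\pi:(x,t)\mapsto x$ as a local diffeomorphism onto the slice $\R^3$, so near each of its points it is a graph $t=u(x)$ with $|Du|<1$; on such a graph the maximal hypersurface equation becomes the quasilinear equation
\be
\div \lf( \frac{Du}{\sqrt{1 - |Du|^2 }} \ri) = 0 ,
\ee
which is elliptic precisely while $|Du|<1$, and the area functional to be maximized is $\int \sqrt{1 - |Du|^2}$. Since the spacelike competitors spanning $\Sigma$ are confined by causality to a bounded region determined by $\Omega_0$ and their areas are uniformly bounded by the enclosed projected volume, the direct method in the appropriate function space (functions of bounded variation, or integral currents, following Bartnik's framework) produces a variational, a priori only weak, maximal hypersurface.

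The main obstacle is then regularity, and more precisely the preservation of the spacelike condition. Unlike the Euclidean minimal surface equation, ellipticity here degenerates as $|Du|\to 1$, that is, as the surface approaches being null, so the crucial step is a uniform gradient estimate keeping $|Du|$ bounded away from $1$. In the interior this is furnished by the estimates of Bartnik and Simon \cite{BartnikSimon82}. At the boundary the bound is considerably more delicate, and this is exactly where the spanning hypersurface $\Omega_0$ enters: it serves as a barrier forcing the variational solution to remain strictly spacelike and $C^1$ up to $\Sigma$, an argument I would carry out following Bartnik \cite{Bartnik09}. Once $|Du|$ is uniformly bounded away from $1$ up to the boundary, the equation is uniformly elliptic and standard Schauder theory bootstraps the solution to a smooth (immersed, not necessarily embedded) spacelike hypersurface, with smoothness up to $\Sigma$ inherited from the smoothness of $\Sigma$.

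Finally, I would note that the whole construction is subsumed by the general regularity theory for variational maximal surfaces of Bartnik \cite{Bartnik88}: the present lemma is the special case in which the ambient spacetime is flat $\R^{3,1}$ and the prescribed boundary $\Sigma$ is a smooth, closed, spacelike $2$-surface that bounds a compact spacelike hypersurface. I expect the boundary gradient (uniform spacelikeness) estimate to be the decisive point, since it is what the nonemptiness hypothesis is tailored to supply and what rescues the otherwise degenerate ellipticity of the maximal hypersurface equation.
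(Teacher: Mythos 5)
Your overall instincts are right---the proof does come down to the Dirichlet problem for the maximal surface equation, with interior estimates from Bartnik--Simon \cite{BartnikSimon82} and the spanning hypersurface supplying the boundary control---but your setup has a genuine gap: you formulate the variational problem for graphs $t=u(x)$ over a domain in $\R^3$ (equivalently, for BV functions or currents with boundary $\Sigma$), and this presumes that $\Sigma$ and its spanning hypersurface project injectively to the slice $\{t=0\}$. The hypothesis only gives that the projection $\pi(x,t)=x$ restricted to the spanning hypersurface $M$ is a \emph{local} diffeomorphism; $M$ may wind over a region of $\R^3$ (a ``parking garage'' picture), in which case there is no domain $D\subset\R^3$ with $\p D=\pi(\Sigma)$, no function space of graphs in which to run the direct method, and an integral-current formulation would have to carry multiplicities, with no reason for a maximizer to exist or to coincide with the surface one wants. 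This is precisely why the lemma asserts only an \emph{immersed} maximal hypersurface: the object produced need not be a graph over $\R^3$, nor even embedded in $\R^{3,1}$.

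The paper closes exactly this gap with a covering-type construction (credited to Bartnik \cite{Bartnik09}). One extends $M$ slightly to a spacelike $\tilde M$ and defines $F:\tilde M\times\R\to\R^{3,1}$ by $F(p,s)=(x,s)$ for $p=(x,t)\in\tilde M$; this is a local diffeomorphism, so $N=\tilde M\times\R$ with the pulled-back metric is a flat spacetime in which $\ol M$ and $\Sigma$ lift, as graphs of the time function $v(x,t)=t$, to a genuinely graphical compact spacelike hypersurface $\hat G$ with graphical boundary $\hat\Sigma$. Inside $N$ everything is a graph over the fixed base $M$, so the Bartnik--Simon Dirichlet-problem arguments (with their Lemma 3.3 rephrased in terms of distances in $M$) apply and produce a smooth maximal graph $G$ with $\p G=\hat\Sigma$; pushing forward by the local isometry $F$ then yields the compact, smoothly \emph{immersed} maximal spacelike hypersurface spanning $\Sigma$. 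Your estimates-and-bootstrap outline is what runs inside $N$, and your closing appeal to Bartnik \cite{Bartnik88} matches the paper's own remark that the lemma is a special case of that theory; but as a self-contained argument, the reduction from the possibly non-graphical situation to the graphical one is the missing, decisive step.
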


\begin{proof}
Let $ M $ be a compact spacelike hypersurface in $ \R^{3,1} $ spanned by $ \Sigma $.
By extending $M$ a bit, we may assume that there exists a spacelike hypersurface $ \tilde M $ in $ \R^{3,1} $
such that  $ \ol M \subset \tilde M $. Since $ \tilde M $ is spacelike, $ \tilde M $ is {\it locally} a graph
over an open set in $ \R^{3}$. Hence, the projection map $ \pi : \tilde M \rightarrow \R^3 $,
given by $ \pi ( x, t ) = x $, is a local diffeomorphism. Now consider the map
\be
F : \tilde M \times \R^1 \longrightarrow \R^{3,1},
\ee
given by $ F (p, s) = ( x, s ) $ for any $ p = ( x, t) \in \tilde M $,
then $ F $ is a local diffeomorphism as well. Let $ N = \tilde M \times \R^1 $ equipped with
the pull back metric. Let $ v $ be the time function on $ \tilde M $ in $ \R^{3,1} $,
i.e. $ v ( x, t) = t $. Since $ v $ is a smooth function on $ \tilde M $, we
can consider its graph in $ N $.
Let $ \hat \Sigma $ and $ \hat G $ be the graph of $ v $ over $ \Sigma $ and $ \ol M $
in $ N $ respectively. Then $ \hat G $ is  a compact, spacelike hypersurface in $ N $ whose boundary is $ \hat \Sigma $.
Moreover, $ F|_{ \hat G} : \hat G \rightarrow \ol M \subset \R^{3,1} $ and
$ F |_{ \hat \Sigma } : \hat \Sigma \rightarrow \Sigma \subset \R^{3,1} $  are both isometries.

Now one can carry over the arguments in section 3 in \cite{BartnikSimon82} to prove that there is
a smooth solution (defined on $ M $) to the maximal surface equation in $ N $  such that,
if $ G $ is its graph in $ N$, then $ \p G = \hat \Sigma $.
For example, Lemma 3.3 in \cite{BartnikSimon82} can be rephrased as: For $\theta>0$, let
$$
\mathcal{D}=\{ \phi \in C^{0,1}(\ol M)|\ |D\phi|\le (1-\theta)\}
$$
and
\be
\begin{split}
\mathcal{F} = & \ \{u\in C^2(  M) \ | \ |Du|<1  \text{ with maximal graph} \\
& \ \hspace{2.6 cm} \text{ and $u=\phi$ on $ \Sigma $ for some $\phi \in \mathcal{D}$}\}.
\end{split}
\ee
Then there exists $r_0>$ and $\theta_1>0$ such that for all $u\in \mathcal{F}$ and for all
$p, q \in M$ with $d(p,\Sigma ), d(q, \Sigma )< \frac13r_0$(say) and $d(p,q)= r_0$ we have: $|u(p)-u(q)|\le (1-\theta_1)r_0$.

One then readily checks that $F ( G ) $ is a compact, smoothly immersed, maximal hypersurface in $ \R^{3,1} $ spanned by  $ \Sigma =  F (\hat \Sigma)$.
\end{proof}

To prove Theorem \ref{LiuYau-t1}, we also need a technical lemma concerning the boundary mean curvature of a compact spacelike hypersurface
in $ \R^{3,1}$, whose boundary has spacelike mean curvature vector.

\begin{lma} \label{bdrylemma}
Let $ M $ be a compact $3$-manifold with boundary $ \p M $. Let $ F : M \rightarrow \R^{3,1} $ be
a smooth, maximal spacelike immersion such that  $ F|_{\p M}  : \p M \rightarrow F ( \p M ) \subset \R^{3,1} $
has spacelike mean curvature vector.   Let $ g $ be the pull back metric on $ M $ and
$ k $ be the mean curvature of $ \p M $ in $ (M, g) $ with respect to the outward unit normal.
Then $ k $ must be nonnegative at some point on $ \p M $.
\end{lma}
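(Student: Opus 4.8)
The plan is to argue by contradiction, supposing $k < 0$ everywhere on $\partial M$, and to derive a contradiction with the maximality of $F$ together with the spacelike mean curvature vector condition on $\partial M$. The key object to exploit is the relationship between the mean curvature $k$ of $\partial M$ inside $(M,g)$ and the mean curvature vector $\vec H$ of $\partial M$ as a codimension-two surface in $\R^{3,1}$. Since $F$ is a maximal spacelike immersion, at any point of $M$ the trace of the second fundamental form (with respect to the timelike normal) vanishes; this gives a clean decomposition of $\vec H$ into its part tangent to $M$ and its part normal to $M$ in $\R^{3,1}$.

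**The geometric identity.** First I would set up the orthonormal frame along $\partial M$: let $\nu$ be the outward unit normal to $\partial M$ within $M$, and let $e$ be the future-directed unit timelike normal to $M$ in $\R^{3,1}$. Then $\{\nu, e\}$ spans the normal bundle of $\partial M \subset \R^{3,1}$, and the mean curvature vector $\vec H$ of $\partial M$ decomposes as a combination of the mean curvature in the $\nu$-direction and the $e$-direction. The crucial computation is that the $\nu$-component of $\vec H$ equals $k$ (the mean curvature of $\partial M$ in $(M,g)$), while the $e$-component is controlled by the second fundamental form of $M$ in $\R^{3,1}$ restricted to $\partial M$. Because $F$ is maximal, the second fundamental form of $M$ has zero trace, so the $e$-component of $\vec H$ equals the negative of the trace of that second fundamental form over the single direction $\nu$, i.e. it is expressible purely in terms of how $M$ curves in the $\nu$-direction. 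The Lorentzian norm condition $|\vec H|^2 = k^2 - (\text{$e$-component})^2 > 0$ forces $k^2$ to strictly dominate, hence $|k|$ is bounded below by the magnitude of the $e$-component at every boundary point.

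**The maximal-surface obstruction.** With the sign assumption $k<0$ on all of $\partial M$, I would then invoke a maximum principle / flux argument on the maximal surface. The natural tool is to integrate the maximal surface equation over $M$, or equivalently to use the divergence structure coming from maximality: the mean curvature $k$ of the boundary, integrated against an appropriate weight (such as the lapse function measuring the tilt of $M$ relative to the $t=0$ slice, or the height function $v$ pulled back from the time coordinate), must satisfy a global balance law. Concretely, since $M$ is maximal and spacelike in $\R^{3,1}$, the coordinate functions $x^i$ and $t$ restrict to functions whose Laplacians on $(M,g)$ are governed by the (vanishing) mean curvature; applying the divergence theorem to the gradient of the time function $v = t|_M$ yields a boundary flux integral of $\partial v/\partial \nu$ that is constrained in sign, and this flux is tied to $k$ via the identity above. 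The contradiction emerges because $k<0$ everywhere would force this flux to have a definite sign incompatible with the closed-up nature of $\partial M$ (the integral of a divergence over the compact $M$ must vanish or match a prescribed interior term).

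**Main obstacle.** The hard part will be making the flux/maximum-principle argument rigorous for an immersed (not embedded) maximal hypersurface, since $F$ is only an immersion and $M$ need not be a graph globally in $\R^{3,1}$. I expect the cleanest route is to work intrinsically on $(M,g)$ and use that maximality makes the position vector's time component $v$ satisfy an elliptic equation whose boundary behavior is dictated by $k$ and the spacelike condition on $\vec H$; the sign of $k$ then contradicts the strong maximum principle applied to $v$ (or to a suitable support function), because a strictly negative boundary mean curvature would push the maximal graph in a direction that the interior maximality forbids. Handling the immersion carefully — ensuring the local graph structure and the frame $\{\nu,e\}$ are globally consistent along $\partial M$ — is the technical crux, but once the decomposition of $\vec H$ and the maximal equation are in place, the nonnegativity of $k$ at some point follows from a standard elliptic maximum principle.
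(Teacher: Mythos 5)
Your first half is sound: the decomposition of the boundary mean curvature vector as $\vec H = -k\nu + (\tr_B p)\,n$ and the consequence $|k| > |\tr_B p|$ of the spacelike condition are exactly the ingredients the paper uses. The genuine gap is in your second half, where the contradiction is supposed to come from. The proposed mechanism --- harmonicity of the time function $v = t|_M$ (i.e.\ the divergence structure of the maximal surface equation) plus a flux identity or interior maximum principle on $M$ --- cannot detect the sign of $k$. The divergence theorem applied to the harmonic function $v$ gives $\int_{\p M} \p v/\p \nu \, d\sigma = 0$, a constraint on a \emph{first-order} quantity (the time-tilt of the conormal) which has no pointwise or integral relation to the \emph{second-order} quantity $k$; nothing forces this flux to conflict with $k<0$. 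A degenerate case makes this decisive: if $M$ is a compact domain in the $t=0$ slice $\R^3$ (maximal, with spacelike boundary mean curvature vector wherever it is nonzero), then $v \equiv 0$ and every statement about $v$, its flux, and its extrema is vacuous, yet the lemma is still a nontrivial assertion there (a compact domain in $\R^3$ cannot have boundary mean curvature negative everywhere). Indeed, the paper's proof never uses maximality of $F$ at all; your reliance on it is a red herring.

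The missing idea is a touching-point argument in a \emph{spatial} direction. Since $F(M)$ is compact, normalize so that $F(M)\subset\{x_1\le 0\}$ with $X_0 = F(q) \in F(M)\cap\{x_1=0\}$. Spacelikeness forces $q\in \p M$: an interior extremum of $x_1\circ F$ would require the spacelike tangent $3$-plane to lie in the timelike hyperplane $\{dx_1=0\}$, which is impossible. At $X_0$ one writes $\hat\nu = u\nu + v n$ (where $\hat\nu = \p/\p x_1$, $u^2-v^2=1$), shows $u \ge 0$ hence $u>|v|$, and combines this with $k<0$ and $|k|>|\tr_B p|$ to get $-\la \vec H, \hat\nu\ra = uk + v\,\tr_B p < 0$. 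The contradiction then comes from second-order information at the touching point on the \emph{closed $2$-surface} $\p M$, not on $M$: there is a unit tangent $e$ to $\p M$ at $X_0$ with $-\la \nabla_e e, \hat\nu\ra<0$, and projecting the corresponding curve to $\R^3$, where it must stay in $\{x_1\le 0\}$ while touching $\{x_1=0\}$ at the initial point, forces the opposite sign. (Equivalently: $\Delta_{\p M}\, x_1 = \la \vec H, \p/\p x_1\ra \le 0$ at the maximum of $x_1$ on the closed surface $\p M$.) Note also that the immersed-versus-embedded issue you flagged as the crux is not the real difficulty; it is disposed of by working locally near $X_0$, where spacelikeness makes $F$ a graph over a domain in $\R^3$.
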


\begin{proof}
Suppose $k<0$ everywhere on $\p M $. Since $ {F(M) }$ is a compact subset in $ \R^{3,1}$, without loss of generality,
we may assume that $F(M) \subset \{x_1\le 0\}$ and $ {F(M)}\cap \{x_1=0\}\neq\emptyset$.
Let $X_0=F(q)\in {F(M)}\cap \{x_1=0\}$ for some $ q \in M $. If $q$ is an interior point of $M$,
then there exists an open neighborhood $ V $ of $ q $ in the interior of $ M $ such that the tangent space
of $F( V )$ at $ X_0  $ is $\{x_1=0\}$. This is impossible, because $ F (V) $ needs to be spacelike.
Therefore, $q\in \p M$. Using the fact that $ F $ is a spacelike immersion again, we know there exists
 an open neighborhood $U $ of $ q $ in $ M $ such that $ F( \ol U )$ is a spacelike graph of some
 function $ f $ over $ \ol D $ for some open set $ D \subset \R^3 \cap \{ x_1 \leq 0 \} $.
Let $B=F(\ol U\cap \p M)  $ and let $ \hat{ B } $ be the part of $ \p D $ such that
$ B $ is the graph of $ f $ over $ \hat{ B }$.  We note that $ X_0 \in B $.
Without loss of generality, we may assume that $X_0$ is the origin.

To proceed, we let $T=\frac{\p }{\p t}$  and define the following notations:

  $n$: the future time like unit normal to $F(\ol U)$ in $ \R^{3,1}$;

  $\nu$: the unit outward normal to $ B $ in $F(\ol U)$;

  $\hat\nu$: the unit outward normal to $\hat{ B }$ in $\ol D$.

\noindent We parallel translate $\nu$, $ \hat\nu$  and all the tangent vectors of $B$, $ \hat{B} $ along the $T$ direction.
Also, we consider $f$ as a function on $D\times (-\infty,\infty)$ so that $ f $ is independent of $t$.

Now $\hat\nu$ is  normal to $ B $, so $\hat\nu=u\nu+vn$ for some numbers $ u, v $ satisfying $u^2-v^2=1$.
At $X_0 \in B$, we have $\hat\nu= \frac{\p}{\p x_1} $. Suppose $\alpha( s )=(x_1( s ), x_2(s ), x_3(s ), t(s))$
is a curve in $F(\ol U)$ such that $\alpha(0)=X_0$ and $\alpha'(0)=\nu$. Then, for $t<0$ small,
$\alpha(t)\in F(\ol U)$ and so $x_1(t)<0$. Since $x_1(0)=0$, we have $x_1'(0)\ge 0$, hence
$ u = \la \hat\nu,\nu\ra\ge0$.  Since $u^2=1+v^2$, we have $u> | v | $ at $ X_0 $.

Let $ \vec H $ be the mean curvature vector of $ B $ in $ \R^{3,1}$. Let $ p=p_{ij}  $ be the second
fundamental form of $ F( \ol U )$ in $ \R^{3,1} $ with respect to $ n $. Then
\be
 \vec H=-k\nu + ( \tr_{_B} p ) n,
\ee
where $ \tr_{_B} p $ denotes the trace of $ p $ restricted to $ B $.
Hence,
\be
   -\la \vec H, \hat\nu\ra= uk + v ( \tr_{_B} p) .
\ee
At $ X_0 $, we have shown $u>|v|$. On the other hand, we know $|k|>|\tr_{_B} p|$ (because $ \vec H $ is spacelike) and $k<0$ (by the assumption),
therefore we  have $-\la\vec H, \hat\nu\ra<0$ at $X_0$.
Recall that
\be
\la \vec H, \hat\nu\ra = \la \sum_{i=1}^2 \nabla_{e_i}e_i, \nu \ra ,
\ee
 where $\{e_1, e_2\}$ is an orthonormal frame in $ T_{_{X_0}} B $ and $ \nabla $ is the covariant derivative in $\R^{3,1}$. Hence there exists a unit vector $e\in T_{_{X_0}}B $ such that
\be \label{eq-speciale}
-\la \nabla_e e, \hat\nu\ra<0.
\ee
Suppose $e$ is the tangent of a curve $\gamma(s) \subset B $ at $s=0$.
Let $\hat\gamma(s) \subset \hat B $ be the projection of $ \gamma (s) $ in $\R^3$.
Then
\be
\gamma^\prime (s) = \hat \gamma^\prime (s) + \frac{d}{ds} f ( \hat \gamma (s) ) T .
\ee
Hence,
\begin{equation} \label{eq-gammaandgammap}
   \begin{split}
        -\la \nabla_{\gamma^\prime (s) } \gamma^\prime (s) , \hat\nu\ra
         & =-\la \nabla_{\gamma^\prime (s) }  \hat \gamma^\prime (s) , \hat\nu\ra \\
         &=-\la \nabla_{\hat \gamma^\prime (s) } \hat \gamma^\prime (s) , \hat\nu\ra ,
     \end{split}
\end{equation}
where we have used the facts that $ T $ is parallel, $ T \perp \hat \nu $ and $ \hat \gamma^\prime (s) $
is parallel translated along $ T $. Thus, it follows from \eqref{eq-speciale}, \eqref{eq-gammaandgammap}
and the fact $ e = \gamma^\prime (0) $ that
\be
-\la \nabla_{\hat \gamma^\prime ( 0 ) } \hat \gamma^\prime ( 0) , \hat \nu \ra < 0 .
\ee
But this is impossible because $ \hat \gamma (s) \subset \hat B \subset \{x_1 \le 0\} \cap \R^3$ and $\hat \nu= \frac{ \p }{\p x_1} $
at $ \hat \gamma (0) $.  Therefore, we have proved that $ k $ can {\it not} be negative everywhere on $ \p M$.
\end{proof}

\begin{proof}[Proof of Theorem \ref{LiuYau-t1}]
By Lemma \ref{LiuYau-l1}, we know that $\Sigma$ indeed bounds a compact, smoothly immersed,  maximal spacelike  hypersurface in $\mathbb{R}^{3,1}$.
Precisely, this means that there exists a compact $ 3$-manifold $M$ with boundary $ \p M $ and a smooth, maximal spacelike immersion
$F: M \to \R^{3,1}$ such that $F:\p M\to \Sigma$ is a diffeomorphism.

Let $ g = g_{ij}  $ be the pull back metric on $ M $. Let $ p = p_{ij} $ be the
second fundamental form of the immersion $ F : M \rightarrow \R^{3,1} $.
Let $ R $ be the scalar curvature of $ (M, g)$.
Since $ F $ is a maximal immersion, it follows from the constraint equations (or simply the Gauss equation) that
\be
R = | p |^2 \geq 0,
\ee
where $ `` | \cdot | " $ is taken with respect to $ g $.
On the other hand, let $ k$ be the mean curvature of $ \p M $ in $ (M, g)$
with respect to the outward unit normal and let $ \vec H $ be the mean curvature vector of
$ \Sigma = F ( \p M ) $ in $ \R^{3,1} $,
it is known that
\be \label{eq-spacelikeH}
| \vec H |^2 = k^2 - ( \tr_{_\Sigma } p )^2 ,
\ee
where  $ \tr_{_\Sigma} p $  is the trace of $ p $ restricted to $ \Sigma $. Since $ \vec H $ is spacelike,
\eqref{eq-spacelikeH} implies that either $ k > 0 $ or $ k < 0 $ on $ \p M $ because
 $\p M$ is connected. By Lemma \ref{bdrylemma}, we have $ k  > 0 $ on $ \p M $.

Now let $ k_0 $ be the mean curvature of $ \Sigma $ with respect to the unit outward
normal when it is isometrically embedded in $ \R^3 $.
It follows from \eqref{eq-spacelikeH} that
\be \label{liuyauby}
\int_\Sigma(k_0-|\vec H|) \ d\sigma\ge \int_\Sigma (k_0-k) \ d\sigma .
\ee
On the other hand, by the result of \cite{ShiTam02}, we have
\be \label{shitammass}
\int_\Sigma (k_0-k) \ d\sigma\ge0
\ee
and equality  holds if and only if $(M, g) $ is a domain in $\R^3$. Thus, we
conclude from \eqref{liuyauby} and \eqref{shitammass} that $ \mathfrak{m}_{_{\rm LY}} ( \Sigma ) \geq 0 $. Moreover,  if $ \mathfrak{m}_{_{\rm LY}} ( \Sigma ) = 0 $,
then $(M, g)$ must be flat, hence  $ R = 0 $ and consequently $ p   =0$. Therefore, $ F(M) $ and hence $ \Sigma $
lie on a hyperplane in $\R^{3,1}$. Conversely, if $\Sigma$ lies on a hyperplane in $\R^{3,1}$, then obviously $ \mathfrak{m}_{_{\rm LY}} ( \Sigma ) = 0 $.
\end{proof}

In the sequel, we want to show that the examples given in \cite{MST}
satisfy the assumption in Theorem \ref{LiuYau-t1}. To do that, we need the following definition:

\begin{defn}\label{acausal}
Two points $p$ and $q$ in a Lorentzian manifold $N$ are said to be
{\it causally related} if $p$ and $q$ can be joined by a timelike or
null  path. A set $S$ in
 $N$ is called {\it acausal} if no two points in $S$
are causally related.
\end{defn}

We claim that {\it all surfaces in the examples in \cite{MST} are acausal}.
Suppose this claim is true, then by Theorem 3 in \cite {F}(P.4765),
we know that those surfaces span spacelike hypersurfaces in
$\mathbb{R}^{3,1}$, hence satisfying the assumption in Theorem
\ref{LiuYau-t1}.

To verify the claim, let $\Sigma$ be an example given in \cite{MST}, i.e.
in terms of the usual spherical coordinates $(t, r, \theta, \phi)$ in $ \R^{3,1}$,
$ \Sigma $ is determined by the equation
\be  t = r = F (\theta, \phi) ,
\ee
where $ F = F ( \theta , \phi)$ is a smooth positive function of $ (\theta, \phi) \in \mathbb{S}^2 $.
Suppose $ \Sigma $ is {\it not acausal}, then there exists two distinct points $ p $, $ q $ in
$ \Sigma $ and a path $ \gamma (\tau )$ in $ \Sigma $ such that  $\gamma(0)=p$,
$\gamma(1)=q$, and
\be \label{nonspacelike}
(\dot{x})^2 +(\dot{y})^2 +(\dot{z})^2 \leq (\dot{t})^2,  \ \ \forall \ \tau \in [0,1] ,
\ee
here we denote $\gamma(\tau)=(x(\tau), y(\tau), z(\tau), t(\tau))$
and $``  \ \dot{} \  " $ denotes the derivative with respect to $\tau$.
Since  $\dot {\gamma}\neq 0$, without loss of generality,
we may assume $\dot{t}>0$. Let
$ r=\sqrt{x^2 +y^2 + z^2},$ \eqref{nonspacelike} implies that
\be
 |\dot{r}|\leq \dot{t}.
\ee
Note that $r(0)=t(0)$ and $r(1)=t(1)$, we see that
\be
|\dot{r}|=\dot{t},
\ee
for all $ \tau \in [0, 1]$.
By the equality case in the Cauchy-Schwartz inequality, we must have
\be x=k(\tau)\dot{x}, \  y=k(\tau)\dot{y}, \ z=k(\tau)\dot{z}
\ee
for some function $ k = k (\tau )$ and for all $ \tau \in [0,1]$.
Clearly, this implies that $p$ and $q$  lie on a line which passes
through the origin, or equivalently, $p=aq$ for some  positive number
$a$.  On the other hand, using the Cartesian coordinates,
we may write $p$  as
$$ (F(\theta_1,
\phi_1),F(\theta_1, \phi_1)\sin\theta_1\cos\phi_1, F(\theta_1,
\phi_1)\sin\theta_1\sin\phi_1, F(\theta_1, \phi_1)\cos\theta_1)$$
and write $ q $ as
$$ (F(\theta_2, \phi_2),F(\theta_2, \phi_2)\sin\theta_2\cos\phi_2,
F(\theta_2, \phi_2)\sin\theta_2\sin\phi_2, F(\theta_2,
\phi_2)\cos\theta_2) $$
for some $ (\theta_i, \phi_i) \in \mathbb{S}^2 $, $i =1, 2$.
The fact $p= aq$, for some $a>0$, then implies $p=q$, which is
contradiction. Therefore, $ \Sigma $ is {\it acausal}.

\

\

\section{Appendix}

In this appendix, we give some Lemmas which are needed to
complete the proof of Proposition \ref{good-embedding}.
We will follow closely Nirenberg's argument in \cite{Nirenberg}.
First, we introduce some notations:
given an integer $ k \geq 2 $ and a positive number $ \alpha < 1 $, let
$$
\begin{array}{lll}
 \E^{k, \alpha} & = & \mathrm{the \ space \ of } \ C^{k,\alpha} \ \mathrm{embeddings \ of} \ {S}^2
\ \mathrm{into}  \ \R^3 \\
 \X^{k, \alpha} & =  & \mathrm{the \ space \ of \ } C^{k,\alpha} \ \R^3
 \mathrm{-valued \ vector \ functions \ on \ } {S}^2 \\
 \Sp^{k, \alpha} & = & \mathrm{the \ space \ of \ } C^{k,\alpha} \  \mathrm{symmetric \ (0,2) \ tensors \ on}
 \ {S}^2 \\
 \M^{k, \alpha} & = & \mathrm{the \ space \ of \ } C^{k,\alpha} \  \mathrm{Riemannian \  \ metrics \ on}
 \ {S}^2 .
\end{array}
$$

On page 353 in \cite{Nirenberg}, Nirenberg proved

\begin{lma} \label{lma-ap-1}
Let  $ \sigma \in \M^{4, \alpha}$ be a metric with positive
Gaussian curvature. Let $ X \in \E^{4, \alpha}$ be an
isometric embedding of $(\mathbb{S}^2, \sigma)$ in $ \R^3$.
There exists two positive numbers $ \epsilon $
and $ C$, depending only on $ \sigma$,
such that if $ \tau \in \M^{2, \alpha}$ satisfying
$$ || \sigma - \tau ||_{C^{2, \alpha}} < \epsilon ,$$
then there is an isometric embedding $ Y \in \E^{2, \alpha}$ of $(\mathbb{S}^2, \tau)$
in $ \R^3 $ such that
$$ || X - Y ||_{C^{2, \alpha} } \leq C || \sigma - \tau ||_{C^{2, \alpha}} .$$
\end{lma}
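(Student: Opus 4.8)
The plan is to read the statement as a quantitative stability result for the Weyl problem and to obtain it from the bounded invertibility, with elliptic estimates, of the operator that linearizes the isometric embedding map at $X$. Introduce the nonlinear map $\Psi(Y)=Y^*(g_e)$ sending an embedding to its induced metric, so that $\Psi(X)=\sigma$ and we are looking for $Y$ with $\Psi(Y)=\tau$. Writing $Y=X+Z$ and expanding gives
\be
\Psi(X+Z)=\sigma+2\,dX\cdot dZ+dZ\cdot dZ ,
\ee
so that $\tau-\sigma=2\,dX\cdot dZ+dZ\cdot dZ$; the linearization at $X$ is the first order operator $LZ=2\,dX\cdot dZ$ that already appears in \eqref{eq-linearembedding}.

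First I would record the two analytic inputs, both drawn from \cite{Nirenberg}. Since $\sigma$ has positive Gaussian curvature, for $\|\sigma-\tau\|_{C^{2,\alpha}}$ small enough $\tau$ still has positive Gaussian curvature, so the solution of the Weyl problem produces an isometric embedding $Y$ of $(\mathbb{S}^2,\tau)$ in $\R^3$; this gives the existence half of the claim, with $Y$ determined only up to a Euclidean rigid motion. Second, Theorem 2$'$ of \cite{Nirenberg} solves $LZ=h$ for any symmetric $h$ on the convex surface $X$, and its kernel is exactly the $6$-dimensional space of infinitesimal rigid motions (the infinitesimal rigidity of closed convex surfaces, which is where positive curvature is used). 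Coupled with the Schauder estimates for the associated elliptic equation (6.15) in \cite{Nirenberg}, this produces a bounded inverse of $L$ on a fixed complement of the rigid motions whose norm depends only on $X$, hence only on $\sigma$, since an isometric embedding of $(\mathbb{S}^2,\sigma)$ is unique up to isometry.

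The argument then runs as follows. Normalize $Y$ by composing with a rigid motion so that $Z=Y-X$ lies in the chosen complement of the infinitesimal rigid motions. Using the a priori bounds and the uniqueness in the Weyl problem, a compactness argument shows that $Z$ becomes small in $C^{2,\alpha}$ once $\epsilon$ is small. Feeding the identity $2\,dX\cdot dZ=(\tau-\sigma)-dZ\cdot dZ$ into the a priori estimate for $L$ on the complement then yields
\be
\|Z\|_{C^{2,\alpha}}\le C\lf(\|\tau-\sigma\|_{C^{2,\alpha}}+\|Z\|_{C^{2,\alpha}}^2\ri),
\ee
and absorbing the small quadratic term on the left gives $\|X-Y\|_{C^{2,\alpha}}=\|Z\|_{C^{2,\alpha}}\le C\|\sigma-\tau\|_{C^{2,\alpha}}$, as required. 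Equivalently, one may set this up as a direct application of the implicit function theorem to $\Psi$ restricted to a slice transverse to the rigid motions, the desired inequality being the Lipschitz bound on the local inverse of $\Psi$.

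The main obstacle is the bounded invertibility of $L$: showing that $LZ=2\,dX\cdot dZ$ admits a right inverse bounded in H\"older norm, which rests on the infinitesimal rigidity of convex surfaces and on recasting the first order system as a second order elliptic equation as in \cite{Nirenberg}. A closely related point that must be watched throughout is the bookkeeping of regularity: the passage from $h$ to $Z$ through (6.15) apparently costs one derivative, and the remainder $dZ\cdot dZ$ sits one order below $Z$. The hypothesis that the base embedding $X$ is $C^{4,\alpha}$, two orders above the $C^{2,\alpha}$ level at which the conclusion is stated, is precisely the slack that lets the elliptic gain offset these losses so that the estimate closes.
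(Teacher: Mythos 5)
Your proposal runs in the opposite direction from the proof the paper is relying on. Lemma \ref{lma-ap-1} is exactly Nirenberg's openness estimate (pages 352--353 of \cite{Nirenberg}), and its proof is a constructive iteration: with $\gamma=\tau-\sigma$ one sets $Z_0=0$ and $Z_{n+1}=\Phi(Z_n)$, where $\Phi$ is the solution operator of Theorem 2$'$ for the linear system $2\,dX\cdot dY=\gamma-(dZ)^2$; the quadratic estimates (reproduced in the paper as \eqref{eq-s57} and \eqref{eq-s58}) make this map a contraction as soon as $\|\gamma\|_{C^{2,\alpha}}<\epsilon=\tfrac{1}{4\bar K^2}$, and the fixed point $Z$ yields $Y=X+Z$ together with $\|X-Y\|_{C^{2,\alpha}}\le 2\bar K\,\|\sigma-\tau\|_{C^{2,\alpha}}$. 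The existence of the embedding of $\tau$ and the Lipschitz estimate are produced \emph{simultaneously}; this is precisely why the paper can quote explicit constants and re-run the ``exactly same iteration'' to get the uniform version, Lemma \ref{lma-ap-3}. You instead take the existence of $Y$ as an input and then try to prove only the estimate.

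That structure creates two genuine gaps. First, your existence step invokes ``the solution of the Weyl problem'' for $\tau$, but $\tau$ is only $C^{2,\alpha}$: Nirenberg's existence theorem requires a $C^{4}$ (indeed $C^{4,\alpha}$-type) metric, and existence for $C^{2,\alpha}$ metrics of positive curvature is a later, separate result (Heinz). Worse, within \cite{Nirenberg} this very lemma \emph{is} the openness half of the continuity-method proof of the Weyl existence theorem, so citing that theorem to prove the lemma is circular relative to the source being used. Second, your ``compactness argument'' for the smallness of $Z=Y-X$ in $C^{2,\alpha}$ is not available for free: it needs a priori estimates for isometric embeddings that are uniform over all metrics near $\sigma$, plus the rigidity (uniqueness up to rigid motion) theorem for closed convex surfaces, neither of which you establish; and even granting both, compact embedding of H\"older spaces only gives smallness in $C^{2,\beta}$ for $\beta<\alpha$, whereas your absorption step $\|Z\|\le C(\|\tau-\sigma\|+\|Z\|^2)$ needs smallness of the full $C^{2,\alpha}$ norm (or a bilinear refinement of the quadratic estimate in the spirit of \eqref{eq-s58}, which you do not invoke). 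There is also unaddressed bookkeeping: the normalized $Z$ and the particular solution produced by the linear theory solve the same linear system only up to an infinitesimal rigid motion, which must be controlled before the linear estimate applies to $Z$ itself. The iteration proof needs none of this machinery, which is the point of setting it up as a fixed-point argument.
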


In what follows, we want to show that the constants $ \epsilon $ and $ C$ in
the above Lemma can be chosen to be independent on $ \sigma $, provided
$ \sigma $ is sufficiently close to some $ \sigma^0 \in \M^{5, \alpha}$
(see Lemma \ref{lma-ap-3}). First, we prove the following:

 \begin{lma}\label{lma-ap-2}
Let  $ \sigma^0 \in \M^{5, \alpha}$ be a metric with positive
Gaussian curvature. There exists positive numbers $ \delta$ and $ \hat{K}$,
depending only on $ \sigma^0$,
such that if  $ \sigma \in \M^{4, \alpha}$ satisfying
$$ || \sigma^0 - \sigma ||_{C^{2, \alpha}} < \delta , $$
then for any $\gamma \in \Sp^{2, \alpha}$ and any  $ Z \in \X^{2, \alpha}$,
there exists a solution $ Y \in \X^{2, \alpha} $ to the linear equation
\be \label{eq-ap-linear}
2 d X^\sigma \cdot d Y = \gamma - (d Z )^2  .
\ee
Here $ X^\sigma \in \E^{4, \alpha}$ is any given isometric
embedding of $ (\mathbb{S}^2, \sigma)$.
Moreover, for  every $ Z $ (with $ \gamma $ fixed), a particular solution
$ Y $ denoted by $ \Phi (Z) $ may be chosen so that
 \be \label{eq-s57}
 ||\Phi(Z)||_{C^{2, \alpha}}\le \hat{K} \lf(||\gamma||_{C^{2, \alpha} }+||Z||_{C^{2, \alpha}}^2\ri),
 \ee
 and  for  any $Z, Z^1 \in \E^{2, \alpha}$,
 \be \label{eq-s58}
 ||\Phi(Z)-\Phi(Z_1)||_{C^{2,\alpha}}\le \hat{K} ||Z+Z_1||_{C^{2, \alpha}} \cdot ||Z-Z_1||_{C^{2, \alpha} } .
 \ee
\end{lma}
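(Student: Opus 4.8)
The plan is to reduce equation \eqref{eq-ap-linear} to the scalar elliptic problem that Nirenberg associates to the linearized isometric embedding operator, and then to upgrade his estimate to a \emph{uniform} one by tracking the dependence of all constants on the embedding $X^\sigma$. First I would fix, once and for all, an isometric embedding $X^0 \in \E^{5,\alpha}$ of $(\mathbb{S}^2,\sigma^0)$, whose existence and regularity are guaranteed by \cite{Nirenberg} since $\sigma^0 \in \M^{5,\alpha}$ has positive Gaussian curvature. Applying Lemma \ref{lma-ap-1} with base metric $\sigma^0$ produces positive constants $\epsilon, C$, depending only on $\sigma^0$, so that whenever $\|\sigma^0 - \sigma\|_{C^{2,\alpha}} < \epsilon$ there is an isometric embedding $X^\sigma$ with $\|X^0 - X^\sigma\|_{C^{2,\alpha}} \le C \|\sigma^0 - \sigma\|_{C^{2,\alpha}}$; by the regularity statement (Lemma 1' in \cite{Nirenberg}) one has $X^\sigma \in \E^{4,\alpha}$. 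Since any two isometric embeddings of a closed convex surface differ by a rigid motion of $\R^3$, under which \eqref{eq-ap-linear} and all the norms below are equivariant and norm-preserving, it suffices to prove the estimates for this particular $X^\sigma$; I would then set $\delta \le \epsilon$.

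With $X^\sigma$ uniformly $C^{2,\alpha}$-close to $X^0$, I would carry out Nirenberg's reduction of \eqref{eq-ap-linear}. Writing the right-hand side as $h = \gamma - (dZ)^2 \in \Sp^{1,\alpha}$ and introducing the quantities $\phi$, $p_1$, $p_2$ of (6.5), (6.6) in \cite{Nirenberg}, the equation $2 dX^\sigma \cdot dY = h$ is equivalent to the second-order elliptic scalar equation (6.15) in \cite{Nirenberg} for $\phi$, after which $Y$ is recovered from $\phi$ and $h$ by the formulas (6.11)--(6.13) in \cite{Nirenberg}. The crucial point for uniformity is that the coefficients of (6.15) depend on $X^\sigma$ only through its derivatives up to second order, that is, through the induced metric and second fundamental form of $X^\sigma$. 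Because $\|X^\sigma - X^0\|_{C^{2,\alpha}} \le C\delta$ is small and the Gaussian curvature of $\sigma$ stays uniformly bounded below (being $C^0$-close to that of $\sigma^0$), these coefficients are uniformly bounded in $C^{0,\alpha}$ and the operator in (6.15) is uniformly elliptic. Consequently the Schauder estimate for (6.15), whose constant depends only on the $C^{0,\alpha}$ bounds of the coefficients and on the ellipticity constant, holds with a single constant $\hat{K}$ for all admissible $\sigma$; since the operator in \eqref{eq-ap-linear} is first order in $Y$, inverting it gains one derivative, giving a bounded linear solution operator $T$ with $\|T(h)\|_{C^{2,\alpha}} \le \hat{K}\|h\|_{C^{1,\alpha}}$.

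The estimates \eqref{eq-s57} and \eqref{eq-s58} then follow from the \emph{linearity} of $T$ in $h$. Setting $\Phi(Z) = T(\gamma - (dZ)^2) = T(\gamma) - T((dZ)^2)$ and using $\|(dZ)^2\|_{C^{1,\alpha}} \le C\|Z\|_{C^{2,\alpha}}^2$ yields $\|\Phi(Z)\|_{C^{2,\alpha}} \le \hat{K}(\|\gamma\|_{C^{2,\alpha}} + \|Z\|_{C^{2,\alpha}}^2)$, which is \eqref{eq-s57} after enlarging $\hat{K}$. For \eqref{eq-s58} I would write $\Phi(Z) - \Phi(Z_1) = -T\big((dZ)^2 - (dZ_1)^2\big) = -T\big(dZ\cdot d(Z - Z_1) + d(Z - Z_1)\cdot dZ_1\big)$ and bound $\|(dZ)^2 - (dZ_1)^2\|_{C^{1,\alpha}} \le C\|Z + Z_1\|_{C^{2,\alpha}}\,\|Z - Z_1\|_{C^{2,\alpha}}$, so that the boundedness of $T$ gives the claimed product estimate.

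I expect the main obstacle to be the \emph{uniform solvability} of the scalar equation (6.15), rather than its a priori estimate. On $\mathbb{S}^2$ the relevant operator carries a nontrivial kernel and cokernel, reflecting the infinitesimal rigid motions and the corresponding integrability conditions for recovering a single-valued $Y$, so invertibility is only a Fredholm statement. To obtain a solution operator $T$ with a constant $\hat{K}$ independent of $\sigma$, I would show that the finite-dimensional kernel and the projection onto the range depend continuously on $X^\sigma$ and hence, for $\sigma$ close enough to $\sigma^0$, stay uniformly bounded away from degeneracy; choosing $\Phi(Z)$ to be the particular solution orthogonal to the kernel then yields a genuinely uniform bounded right inverse. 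Verifying this continuity of the Fredholm data, together with the derivative bookkeeping in Nirenberg's recovery formulas \eqref{eq-ap-linear}, is the technical heart of the argument.
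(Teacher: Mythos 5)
Your proposal is sound and, up to the crucial uniformity step, follows the same route as the paper: both reduce \eqref{eq-ap-linear} to Nirenberg's scalar elliptic equation (6.15) for $\phi$, take the particular solution that is $L^2$-orthogonal to the kernel (Nirenberg's (7.3)), recover $Y$ via (6.11)--(6.13), and deduce \eqref{eq-s57}--\eqref{eq-s58} from boundedness plus linearity of the solution operator, exactly as you do with the identity $(dZ)^2-(dZ_1)^2 = d(Z+Z_1)\cdot d(Z-Z_1)$. You are also right that uniform ellipticity and uniform $C^{0,\alpha}$ coefficient bounds are not by themselves enough, since the Schauder estimate retains a $\|\phi\|_{C^0}$ term; removing that term uniformly in $\sigma$ is the heart of the matter. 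The difference lies in how this is done. You propose a perturbation-of-Fredholm-data argument: continuity of the three-dimensional kernel and of the projection onto the range as $X^\sigma$ varies, yielding a uniformly bounded right inverse on the orthogonal complement. The paper instead proves the uniform $C^0$ bound \eqref{eq-ap-c0} by contradiction and compactness: if it failed, the normalizations $\xi_i=\phi_i/\|\phi_i\|_{C^0}$ would be bounded in $C^{1,\alpha}$ by Theorem 8.32 of \cite{Gilbarg-Trudinger}, hence converge in $C^1$ to a weak solution $\xi$ of the homogeneous limit equation with $\|\xi\|_{C^0}=1$; elliptic regularity---this is precisely where the hypothesis $\sigma^0\in\M^{5,\alpha}$ enters, giving $C^{2,1}$ coefficients and $\xi\in W^{4,2}\subset C^2$---shows $\xi\in Ker(\sigma^0)$, while the $L^2$-orthogonality of each $\xi_i$ to $Ker(\sigma_i)$ passes to the limit and forces $\xi=0$, a contradiction. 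The two mechanisms are interchangeable and rest on the same ingredients (Nirenberg's identification of the kernel as the span of the components of the inner normal, and orthogonality of the chosen solution), so your route would work; the paper's compactness argument is shorter to write but non-quantitative, whereas your Fredholm-continuity route, if executed, requires operator-norm convergence and explicit control of the projections, and note that it still needs the extra regularity of $\sigma^0$ (or some substitute) to identify the kernel of the \emph{limit} operator, which does not follow from $C^{0,\alpha}$ convergence of coefficients alone. Be aware that this step, which you correctly flag as the technical heart, is the one your proposal leaves as a plan rather than a proof. On the other hand, your preliminary normalization---fixing $X^0$, using Lemma \ref{lma-ap-1} to choose $X^\sigma$ close to $X^0$, and invoking rigid-motion equivariance to handle ``any given'' embedding---is left implicit in the paper and is a genuine clarification, since the paper's limiting argument tacitly requires the normals $X_3^{\sigma_i}$ to converge to $X_3^{\sigma^0}$.
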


\begin{proof}  We proceed exactly as in \cite{Nirenberg}.
For any $ \sigma \in \M^{4, \alpha}$, let $ X^\sigma$ be a given
isometric embedding of $(\mathbb{S}^2, \sigma)$ in $ \R^3$.
 Let  $ X_3 $ be the unit inner normal to the surface $ X^\sigma (\mathbb{S}^2 )$.
 Let $\{u, v\}$ be a local coordinate chart on $\mathbb{S}^2$.
 Let $\phi$, $p_1$, $p_2$ be defined as in  (6.5),  (6.6) in \cite{Nirenberg}.
 Then $ \phi$, $p_1$, $p_2 $ satisfy the system of equations (6.11)-(6.13) in \cite{Nirenberg}
 with $ c_1$, $c_2$, $ \Delta$ defined  on
 page 356-357 in \cite{Nirenberg}.
By section 6.3 in \cite{Nirenberg}, the derivatives of $Y$
are completely determined by $ \phi $, which satisfies (6.15) in \cite{Nirenberg}.
Let $ \phi $ be given by (7.3) in \cite{Nirenberg}, following the first paragraph
in section 8.1 in \cite{Nirenberg}, we  obtain a unique solution $ Y \in \E^{2, \alpha}$
to \eqref{eq-ap-linear}, normalized to vanish at a fixed point on $ \mathbb{S}^2 $.
We denote such a $ Y $ by $ Y = \Phi (Z)$.

To prove estimates \eqref{eq-s57} and \eqref{eq-s58},  by the Remark
on page 365 in \cite{Nirenberg} and the proof following it,
we know it suffices to show
\be \label{eq-s86}
|| Y ||_{ C^{2, \alpha} } \leq C \lf[ || d \bar{\sigma}^2 ||_{C^{1, \alpha}}
 + || \frac{1}{\Delta} ( c_{1v }- c_{2u} ) ||_{C^\alpha} \ri],
\ee
where $ d \bar{\sigma}^2 = \gamma - ( d Z)^2 $, $c_{1v}$, $ c_{2u}$
are derivatives of $ c_1$, $c_2$ with respect to $v$, $u $ respectively.
On the other hand, by section 9 in \cite{Nirenberg},  to prove \eqref{eq-s86},
it suffices to establish an $C^{1, \alpha}$ estimate of $ \phi$:
\be \label{eq-s94}
|| \phi ||_{C^{1, \alpha} } \leq C || d \bar{\sigma}^2 ||_{C^{1, \alpha}} .
\ee
Therefore, in what follows, we will prove that there are positive numbers
$ \delta $ and $ C $, depending only on $ \sigma^0$,
such that  \eqref{eq-s94}  holds for any $ \sigma $ satisfying
$ || \sigma^0  - \sigma ||_{C^{2, \alpha}} < \delta $.

We first recall the fact that $ \phi $ is a solution to the second order elliptic equation
(6.15) in \cite{Nirenberg}.
For simplicity, we let
\be
L_\sigma ( \phi) = \mathcal{L}(\phi_u, \phi_v) , \ \
F_\sigma ( d \bar{\sigma}^2  ) = \mathcal{L}(c_1, c_2) - T,
\ee
where $  \mathcal{L}(\phi_u, \phi_v) $,
$ \mathcal{L}(c_1, c_2) - T$  are given as in  (6.16) and (6.14)  in \cite{Nirenberg}, then
(6.15) in \cite{Nirenberg}  becomes
\be \label{eq-ap-pde}
L _\sigma ( \phi ) + H_\sigma \phi  = F_\sigma ( d \bar{\sigma}^2 ),
\ee
where $ H_\sigma $ is the mean curvature of $ X^\sigma (\mathbb{S}^2 ) $ w.r.t $ X_3 $
(note that our $ H$ here equals $ 2 H $ in \cite{Nirenberg}).
On the other hand,  since  we have chosen $ \phi $  to be given by the integral
formula (7.3) in \cite{Nirenberg}, we know $ \phi $ is a special solution to \eqref{eq-ap-pde}
in the sense that $ \phi $ is $L^2$-perpendicular to the kernel of the operator
$ L_\sigma ( \cdot) + H_\sigma $ (See page 359 in \cite{Nirenberg}).
For any $ \sigma \in \M^{4, \alpha}$,
let $ Ker(\sigma) $ denote the space of solutions $ \psi $ to the
 the homogeneous equation
\be
L_\sigma ( \psi ) + H_\sigma \psi = 0 .
\ee
On page 360 in \cite{Nirenberg}, it was shown that $ Ker(\sigma)$
is spanned by the coordinate functions of $ X_3 $.

Note that the coefficient  of  \eqref{eq-ap-pde} depends only on the metric $ \sigma $.
Therefore, if $ \sigma $ is close to $ \sigma^0$  in $ {C^{2, \alpha}} $,
 we know  by Theorem 8.32 in \cite{Gilbarg-Trudinger} that, to prove
 the $C^{1, \alpha}$ estimate \eqref{eq-s94}, it suffices to prove the following
 $ C^0$ estimate
  \begin{equation}  \label{eq-ap-c0}
    ||\phi||_{C^0} \le C || d \bar{\sigma}^2 ||_{C^{1, \alpha} },
  \end{equation}
  where $ C $ is  some positive constant independent on $\sigma $, provided $ \sigma$ is
  sufficiently close to $ \sigma^0 $ in $ C^{2, \alpha}$.

Suppose \eqref{eq-ap-c0}  is not true, then there exists $ \{ \sigma_i \} \subset \M^{4, \alpha}$
which converges to  $ \sigma^0 $ in $ C^{2, \alpha}$,  $ \{ d \bar{\sigma}^2_i \} \subset \Sp^{1, \alpha}$
with $ || d \bar{\sigma}^2_i ||_{C^{1, \alpha} } = 1 $, and a sequence of numbers $ \{ C_i \}$ approaching
$ + \infty$ so that the corresponding $ \phi_i $ (of $ Y = Y_i $) satisfies
$$ || \phi_i ||_{C^0} \geq C_i . $$
Consider $ \xi_i =  \phi_i / || \phi_i ||_{C^0}$, then $ \xi_i $ satisfies
\be \label{eq-ap-xi}
L_{\sigma_i} ( \xi_i ) + 2 H_{\sigma_i} \xi_i = || \phi_i ||^{-1}_{C^0} F_{\sigma_i} ( d \bar{\sigma}_i^2) .
\ee
By Theorem 8.32 in \cite{Gilbarg-Trudinger}, we conclude from \eqref{eq-ap-xi} and
the facts $ \{ \sigma_i \} $ converges to  $ \sigma^0 $ in $ C^{2, \alpha}$
and $ || \xi_i ||_{C^0} = 1 $ that
\be \label{eq-ap-xi2}
|| \xi_i ||_{C^{1, \alpha} } \leq C,
\ee
where $ C $ is some positive constant independent on $ i $.
Now \eqref{eq-ap-xi2} implies that $\xi_i$ converges in $C^1$ to some $\xi$ which is also in
$C^{1,\alpha}$. Moreover, $ || \xi ||_{C^0} = 1 $.
By \eqref{eq-ap-xi}, $\xi$ is a weak solution to the equation
\be \label{eq-ap-xi3}
L_{\sigma^0} \xi +H_{\sigma^0} \xi=0  .
\ee
Since  $ \sigma^0 \in C^{5, \alpha}$, the coefficients of \eqref{eq-ap-xi3}
(given by (6.16) in \cite{Nirenberg}) are then in $ C^{3, \alpha}$, hence in $C^{2,1}$.
By Theorem 8.10 in \cite{Gilbarg-Trudinger}, we know $ \xi \in W^{4, 2} $, hence in $ C^2$.
Therefore, $ \xi $ is  a classic solution to \eqref{eq-ap-xi3}, i.e. $ \xi \in Ker(\sigma^0) $.
On the other hand, we know $ \phi_i $, hence $ \xi_i$,  is $ L^2$-perpendicular to $ Ker(\sigma_i )$ for each
$i $. Since $ \{ \sigma_i \}$ converges to $ \sigma^0$ in $ C^{2, \alpha}$ and $ \{ \xi_i \}$
converges to $ \xi $ in $ C^1 $, we conclude that $ \xi $ must be $ L^2$-perpendicular to
$Ker(\sigma^0)$. Hence, $ \xi $ must be zero. This is a contradiction to the fact $ || \xi ||_{C^0} = 1 $.
Therefore, we conclude that \eqref{eq-ap-c0} holds.

As mentioned earlier, once we establish the $C^0$  estimate  \eqref{eq-ap-c0},  we will  have
the $C^{1, \alpha}$ estimate \eqref{eq-s94}. Then we can proceed as in the rest of section 9 in
\cite{Nirenberg} to prove \eqref{eq-s86}, hence prove \eqref{eq-s57} and \eqref{eq-s58}.
\end{proof}

We note that the constants $ \epsilon $ and $ C $ in Lemma \ref{lma-ap-1} indeed can be chosen
as  $ \epsilon = \frac{1}{ 4 \bar{K}^2} $ and $ C = 2 \bar{K} $,
where $ \bar{K} $ is the constant in Theorem 2' on page 352 in \cite{Nirenberg}.
Therefore, by applying the exactly same iteration argument as on page 352-353 in \cite{Nirenberg},
one concludes from
Lemma \ref{lma-ap-2} that

\begin{lma} \label{lma-ap-3}
Let  $ \sigma^0 \in \M^{5, \alpha}$ be a metric with positive
Gaussian curvature. There exists positive numbers $ \delta$, $\epsilon $
and $ C$, depending only on $ \sigma^0$,
 such that for any  $ \sigma \in \M^{4, \alpha}$ satisfying
$$ || \sigma^0 - \sigma ||_{C^{2, \alpha}} < \delta ,$$
 if $ \tau \in \M^{2, \alpha}$ satisfying
$$ || \sigma - \tau ||_{C^{2, \alpha}} < \epsilon , $$
then there is an isometric embedding $ Y \in \E^{2, \alpha}$ of $(\mathbb{S}^2, \tau)$
in $ \R^3 $ such that
$$ || X - Y ||_{C^{2, \alpha} } \leq C || \sigma - \tau ||_{C^{2, \alpha}} .$$
Here $ X \in \E^{4, \alpha} $ is any given isometric embedding of $ (\mathbb{S}^2, \sigma)$.

\end{lma}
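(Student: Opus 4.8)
The plan is to upgrade Lemma \ref{lma-ap-1} to a statement with \emph{uniform} constants by reformulating the isometric embedding problem as a fixed-point problem for the solution operator $\Phi$ supplied by Lemma \ref{lma-ap-2}, and then running Nirenberg's iteration with the estimates \eqref{eq-s57} and \eqref{eq-s58}, whose constant $\hat{K}$ is uniform over the $C^{2,\alpha}$-neighborhood $\{\|\sigma^0-\sigma\|_{C^{2,\alpha}}<\delta\}$. First I would fix such a $\sigma$, a given isometric embedding $X=X^\sigma\in\E^{4,\alpha}$ of $(\mathbb{S}^2,\sigma)$, and a metric $\tau\in\M^{2,\alpha}$; set $\gamma=\tau-\sigma$. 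Seeking the embedding of $(\mathbb{S}^2,\tau)$ in the form $Y=X+W$, the isometry condition $dY\cdot dY=\tau$ becomes $2\,dX\cdot dW=\gamma-(dW)^2$, which is precisely $W=\Phi(W)$ for the operator $\Phi$ of Lemma \ref{lma-ap-2} (with $Z=W$). I would therefore define $W_0=0$, $W_{n+1}=\Phi(W_n)$, and show $\{W_n\}$ converges in $C^{2,\alpha}$.

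Convergence is where the two estimates do the work, with the constants arranged so that $\epsilon$ and $C$ come out in terms of $\hat{K}$ alone. Write $a=\|\gamma\|_{C^{2,\alpha}}$. From \eqref{eq-s57}, $\|W_1\|_{C^{2,\alpha}}=\|\Phi(0)\|_{C^{2,\alpha}}\le\hat{K} a$, and inductively, if $\|W_n\|_{C^{2,\alpha}}\le 2\hat{K} a$, then
\[
\|W_{n+1}\|_{C^{2,\alpha}}\le\hat{K}\big(a+\|W_n\|_{C^{2,\alpha}}^2\big)\le\hat{K} a\,(1+4\hat{K}^2 a)\le 2\hat{K} a
\]
as long as $a\le\tfrac{1}{4\hat{K}^2}$, so the choice $\epsilon=\tfrac{1}{4\hat{K}^2}$ keeps the whole sequence in the ball of radius $2\hat{K} a$. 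From \eqref{eq-s58}, for $a<\epsilon$,
\[
\|W_{n+1}-W_n\|_{C^{2,\alpha}}\le\hat{K}\,\|W_n+W_{n-1}\|_{C^{2,\alpha}}\,\|W_n-W_{n-1}\|_{C^{2,\alpha}}\le 4\hat{K}^2 a\,\|W_n-W_{n-1}\|_{C^{2,\alpha}},
\]
so the map is a contraction and $W_n\to W$ in $C^{2,\alpha}$ with $W=\Phi(W)$ and $\|W\|_{C^{2,\alpha}}\le 2\hat{K} a$. Then $Y=X+W$ is the desired $C^{2,\alpha}$ isometric embedding of $(\mathbb{S}^2,\tau)$, and $\|X-Y\|_{C^{2,\alpha}}=\|W\|_{C^{2,\alpha}}\le 2\hat{K}\|\sigma-\tau\|_{C^{2,\alpha}}$, giving $C=2\hat{K}$. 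Since $X$ is an embedding and $\|W\|_{C^{2,\alpha}}$ is small, $Y$ is again an embedding after shrinking $\epsilon$ if necessary, and the higher regularity $Y\in\E^{k,\alpha}$ follows from Lemma 1' in \cite{Nirenberg} as in the main text.

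The essential point is uniformity: because Lemma \ref{lma-ap-2} produces a single $\hat{K}$ valid for \emph{all} $\sigma$ with $\|\sigma^0-\sigma\|_{C^{2,\alpha}}<\delta$, the resulting thresholds $\epsilon=\tfrac{1}{4\hat{K}^2}$ and $C=2\hat{K}$ depend only on $\sigma^0$, which is exactly the improvement of Lemma \ref{lma-ap-3} over Lemma \ref{lma-ap-1}. Accordingly, I expect the genuine difficulty to lie entirely in Lemma \ref{lma-ap-2}, namely the $\sigma$-independent $C^0$ (hence $C^{1,\alpha}$) bound \eqref{eq-ap-c0}, proved by a compactness/contradiction argument that uses $\sigma^0\in\M^{5,\alpha}$ to ensure that a weak limit lands in the kernel $Ker(\sigma^0)$ spanned by the components of the Gauss map, forcing it to vanish. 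Once that uniform linear estimate is in hand, the fixed-point argument above is routine and mirrors Nirenberg's iteration on pages 352--353 of \cite{Nirenberg}.
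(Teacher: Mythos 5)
Your proposal is correct and follows essentially the same route as the paper: the paper's proof simply invokes "the exactly same iteration argument as on pages 352--353 in \cite{Nirenberg}" applied with the uniform constant $\hat{K}$ of Lemma \ref{lma-ap-2}, arriving at precisely your choices $\epsilon = \tfrac{1}{4\hat{K}^2}$ and $C = 2\hat{K}$. You have merely written out the fixed-point iteration that the paper cites, and you correctly identify that the real content lies in the uniformity of $\hat{K}$ from Lemma \ref{lma-ap-2}.
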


\vspace{.2cm}

\bibliographystyle{amsplain}

\end{document}